\documentclass[10pt]{article}   		
\usepackage{amssymb}
\usepackage{amsmath}
\usepackage{enumerate}
\usepackage{graphicx}
\usepackage{psfrag}
\usepackage{color} 
\usepackage{comment}

%
%

\def\ddefine#1{{\bf #1}}
\def\vp{\vspace{\parskip}}
\def\mpp#1{\marginpar{\tiny \%\%  #1}}
\def\mpp#1{{}}

\newtheorem{thm}{Theorem}
\newtheorem{lem}[thm]{Lemma}
\newtheorem{prop}[thm]{Proposition}
\newtheorem{definition}[thm]{Definition}
\newtheorem{cor}[thm]{Corollary}

\newenvironment{proof}{{\em Proof }} {\hfill{$\Box$}}

%
%

\def\Nn{{\mathbb N}}
\def\Zz{{\mathbb Z}}
\def\Rr{{\mathbb R}}
\def\Hh{{\mathbb H}}
\def\Qq{{\mathbb Q}}

\def\aA{{\mathcal A}} 
\def\bB{{\mathcal B}}
\def\aAb{{\mathcal A}_{\mathcal{B}}}
\def\aAO{{{\mathcal A}_0}}

\def\tX{{\tt Y}}
\def\tY{{\tt W}}



\def\wlen#1{{\| #1\|}}

\def\approxN{ \underset N{\approx} }

\def\vect#1{{\bf #1}}
\def\vectv{{\vect v}}
\def\vA{{\vect A}}
\def\normed#1{{\overline{#1}}}

\def\To{\rightarrow}
\def\anc{\textrm{\sf P}}
\def\anct{\textrm{\sf Q}}



%

%

\def\fF{{\mathcal F}} 
\def\vertices{{V}} 
\def\labelmap{{\omega}} 
\def\orbit{{\mathcal O}} 
\def\Gpq{{\Gamma_{p,q}}} 
\def\Gsurf{{\Gamma_{4g,4g}}} 
\def\reducedorbitgraph{{\overline{\Gamma_\orbit}}} 
\def\Asharp{{\mathcal{A}_\#}}
\def\sigmasharp{{\sigma_\#}}
\def\XF{{X_\mathcal{F}}}
\def\fraks{{\mathfrak{s}}}
\def\row{{\mathrm{row}}}
\def\dy{{dy}}
\def\HG{{\mathcal{H}\Gamma}}
\def\Parent{{\mathrm{Par}}}
\def\Deltapq{{\Delta_{p,q}}}

%
%

\parindent = 0in
\parskip = \baselineskip

\setlength{\textheight}{9in}
 \setlength{\textwidth}{5.75in}
\setlength{\oddsidemargin}{.25in} 
\setlength{\evensidemargin}{.25in}
\setlength{\topmargin}{-.3in}\setlength{\headsep}{.2in}

\def\vp{\vspace{\baselineskip}}

\title{Strongly aperiodic subshifts on surface groups}
\author{David. B. Cohen\\ Univ. Chicago \\ {\tt davidbrucecohen@gmail.com} \and C. Goodman-Strauss \\ Univ. Arkansas \\ {\tt strauss@uark.edu}}
\date{}

\begin{document}

\maketitle

\abstract{We give strongly aperiodic subshifts of finite type on every hyperbolic surface group; more generally, for each pair of expansive primitive symbolic substitution systems with incommensurate growth rates, we construct strongly aperiodic subshifts of finite type on their orbit graphs.}

\section{Introduction}

A ``subshift of finite type" on a group $G$, with some alphabet $\aA$, is a subset of $\aA^G$ defined by allowed (or forbidden) local patterns, and is ``strongly aperiodic" if every element of it  has trivial stabilizer. 

Strongly aperiodic subshifts of finite type (SASFTs), and the close analog of strongly aperiodic tilings, have been studied in a variety of contexts, notably $\Zz^n$ (\cite{berger, robinson} and a great many others) $\Hh^n$ \cite{gs_sasht,kari_undH}, the integer Heisenberg group \cite{ssu},  higher-rank symmetric spaces \cite{mozes_stap}, polycyclic groups \cite{jeandel_poly} and $\Zz\times G$ for a general class of group $G$ \cite{jeandel_stap}. Whether or not a group admits  a SASFT is a 
quasi-isometry invariant under mild conditions \cite{cohen}, and a commensurability invariant \cite{carroll_penland}. It is known that no free group admits a SASFT \cite{piantadosi} and more generally, nor does any group with two or more ends  \cite{cohen}, nor any group with undecidable word problem \cite{jeandel_stap}.

Here we construct a strongly aperiodic subshift of finite type on genus $>1$ surface groups (Corollary~\ref{Cor:MainThm}). Though strongly aperiodic tilings have been constructed in $\Hh^2$~\cite{gs_stap, gs_sasht, kari_undH} the underlying tiles do not admit a tiling of any compact fundamental domain. Here we use an observation from~\cite{gs_rptt}, that any regular tiling of $\Hh^2$  can be encoded as an ``orbit graph" of an expansive primitive symbolic substitution system (greatly distorting the geometry but maintaining the combinatorial structure).  In turn,  given any pair of  expansive primitive symbolic substitution systems with incommensurate growth rates, we construct SASFT's on their orbit graphs. 

This technique can be seen as a generalization of the construction in~\cite{gs_stap}, in turn derived from~\cite{kari}--- in our terms here, the underlying substitution systems there are simply ${\tt 0}\mapsto{\tt 0}^2$ and ${\tt 0}\mapsto{\tt 0}^3$. 

\begin{figure}
\centerline{\includegraphics{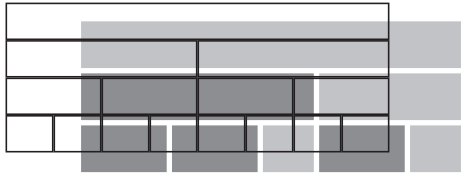}}
\caption{A pair of overlaid pieces of orbit tilings --- duals of orbit graphs for the substitution systems ${\tt 0}\mapsto{\tt 00}$ (with growth rate 2) and 
${\tt A}\mapsto{\tt AB}$, ${\tt B}\mapsto{\tt AAB}$ (with growth rate $\frac 1 2 (3+\sqrt 5)$). 
In effect, we create a set of tiles encoding the local combinatorics of how these tilings may meet; any tiling by these tiles must enforce the  growth rates of the underlying substitution systems. In turn, as these growth rates are incommensurate, there can be no vertical period.}
\end{figure}

\section{Preliminary definitions}

We take the convention that $0$ is not a natural number, that $\Nn = \{1,2,\ldots, \}$.  We will write $n'$ for $n+1$. For any sequence $\{x_i\}\in\Rr^\Zz$, we define $$\textstyle \overset{n}{\underset i \sum}\ x_i:=
\left\{\begin{array}{clrlr} 0 & \textrm{for } n= 0\\
x_0+\ldots + x_{n-1} & \textrm{for } n >0\\
-x_{n}-\ldots - x_{-1} & \textrm{for } n <0\end{array}\right.$$
so that for all $n\in\Zz$, $x_n=\overset{n'}{\underset i \sum}\  x_i -\overset{n}{\underset i \sum}\  x_i$.

Let $\aA$ be any finite alphabet and $L\subset\aA^{*}$ be any  language on $\aA$. 
For any word $w\in\aA^\ast$, let $\wlen{w}$ be the 
length of $w$. We define the language 
$ L^{\infty}\subset\aA^{\Zz}$   of \ddefine {infinite 
words} to be sequences $\omega\in\aA^{\Zz}$ such that every finite 
subsequence $\omega(i)\ldots\omega(j)$ is a subword of some word in 
$L$. If $L$ is an 
infinite  
regular language, then by the Pumping Lemma,
 $ L^{\infty}\neq\emptyset$.
To compress notation, for finite or infinite words, we will often write $\omega_i$ for $\omega(i)$, and  $\omega_{(i\ldots j)}$ or $\omega(i\ldots j)$ for the word $\omega(i)\ldots\omega(j)$. For finite words we will also write $\omega_{(k\ldots)}$ or $\omega(k\ldots)$ for $\omega(k)\ldots \omega(\wlen{\omega})$.

Given an infinite set 
$\{u^{n}\}\subset(\aA^{*})^\Zz$ 
the    \ddefine{infinite 
concatenation} $\omega=\ldots u^{-1}u^{0}u^{1}\ldots \in \aA^\Zz$  satisfies,  for 
all $n\in\Zz$,
$\omega{((s_n)'\ldots s_{n'})}=u^{n}$ where $s_n=\overset{n}{\underset i \sum}\ {\wlen{u^i}}$.
 This definition coincides with what 
one might expect, taking $\omega(1)$ to coincide with $u^0(1)$. Note if $L$ is closed under concatenation, then every infinite concatenation of words in $L$ is in $L^\infty$. 
Given, say, alphabets $\aA$ and $\bB$, and any map $f:\aA\To\bB^\ast$, we will naturally define $f:\aA^{\ast}\To\bB^{\ast}$ as $f(w)=f(w_1)\ldots f(w_n)$, and $f:\aA^\Zz\To\bB^\Zz$ as $f(\omega) = \ldots f(\omega_{-1})f(\omega_0)f(\omega_1)\ldots $.

\paragraph{Productions}\ 

 A  \ddefine{production system}  $(\aA,L,R)$ is specified by an alphabet $\aA$,  language $L$ on $\aA$ and ``production rules", a finite subset $R$ of $\aA\times L$. A \ddefine{regular} production system is one for which $L$ is a regular language. (\cite{gs_rptt} gives many applications and examples.)

Given $(\aA,L,R)$, we extend our production rules $R$ to  define \ddefine{production relations} $R\subset (L\times L)\cup (L^\infty\times L^\infty)$ 
on finite and infinite words: For finite words $u,v\in L$,  $(u,v)\in R$ if and only if $v=v^1\ldots v^{\wlen{u}}$ 
for some $\{v^i\}\in L^{\wlen{u}}$ with each $(u_i,v^i)\in R$. 

For infinite words  $\omega,\sigma\in L^{\infty}$, $(\omega,\sigma)\in R$ if and only if  for some monotonic and onto ``parent function" $\anc:\Zz\To\Zz$, each $(\omega_i,\sigma|_{\anc^{-1}(i)})\in R$.  In other words, each letter $\omega_i$ in $\omega$ produces the word with indices in $\anc^{-1}(i)$ in $\sigma$ (formally, the function $\sigma:\anc^{-1}(i)\To\aA$), and each letter $\sigma_j$ in $\sigma$ appears in the word produced by $\omega_{\anc(j)}$. As $\anc$ is monotonic, each $\sigma^i = \sigma|_{\anc^{-1}(i)}$ {\em is} a well-defined  word, and as $\anc$ is onto $\sigma=\ldots \sigma^{-1}\sigma^{0}\sigma^{1}\ldots$  each $\sigma^i$ is of finite length. We say that ``$\omega$ produces $\sigma$ with respect to $\anc$." 

\vp
An \ddefine{orbit} in a production system $(\aA,L, R)$ is 
any set $\{(\omega^{i},\anc_{i})\}\in (L^{\infty}\times \Zz^\Zz)^\Zz$  such that for all 
$i\in\Zz$,   $\anc_i$ is monotonic and onto, and  $\omega^i$ produces $\omega^{i'}$ with respect to $\anc_{i}$.  ({\em A priori} orbits may or may not exist.)
An orbit is  \ddefine{periodic} if and only if there is some $\pi\geq 1$ with 
$\omega^{i}=\omega^{i+\pi}$, $\anc_{i}=\anc_{(i+\pi)}$ for all $i$; the period of the orbit is 
the minimal such $\pi$. 
 
 A production system is  \ddefine{expansive} if for every sequence of words $u^1,\ldots \in L$ with $(u^i,u^{i'})\in R$, there is some $n\in\Nn$, such that for all $i>n$,  $\wlen{u^{i}}>\wlen{u^1}$.

\vp

\paragraph{Symbolic substitutions}\ 
 
 A \ddefine{symbolic  substitution system} is a production system $(\aA, \aA^\ast, \sigma)$ such that $\sigma$ is a function $\aA\to\aA^\ast$ --- that is, for each $a\in\aA$, we have exactly one value $\sigma(a)$. We will write $(\aA, \sigma)$ for $(\aA, \aA^\ast, \sigma)$. A symbolic substitution system  is  \ddefine{primitive}  if and only if, for some $N$, for every $n>N$ and $a,b\in\aA$, the letter $b$ occurs within $\sigma^n(a)$. Note that a primitive symbolic substitution system is expansive if for at least some $a\in\aA$, $\wlen{\sigma(a)}>1$.

 It is well known that orbits exist in any primitive symbolic substitution systems, using that some letter $a$ appears in the interior of some $\sigma^n(a)$. In fact, there are uncountably many orbits, as some letter must appear more than once in some $\sigma^n(a)$, giving a countable sequence of choices in the construction of an orbit; however there are only countably many periodic orbits~\cite{gs_rptt}.

\vp

For functions $f,g:\Nn\to\Rr$, we write $f(k)=\Theta(g(k))$ to mean that for some constant $C>1$, for some $M$, for all $k>M$, we have $C^{-1}g(k)\leq f(k)\leq Cg(k)$.  Note that if $s^k=\Theta(t^k)$, then $s=t$.
For a given $\vectv=(v_1,
\ldots,v_n)\in\Rr^n$, let $|\vectv|=\sum |v_i|$  (that is, the 1-norm of $\vectv$) and let $\normed{\vectv}:=\vectv/|\vectv|$. 
The following is well-known (cf. \cite{kitchens}):
\begin{thm}\label{ThmEigenSubst} Let $(\aA,\sigma)$ be a primitive, expansive symbolic substitution system on a finite alphabet $\aA=\{a_1,\ldots, a_n\}$. For each $w\in\aA^{\ast}$, let $\vectv_w\in\Rr^n$ count each letter: that is, the $i$th coordinate of  $\vectv_w$ is the number of occurrences of $a_i$ in $w$.
Then there exists $\lambda>1$ such that 
for all $w\in\aA^*$, $\wlen{\sigma^{k}(w)}=\Theta(\lambda^k)$
.
\end{thm}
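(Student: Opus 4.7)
The plan is to reduce the problem to Perron--Frobenius theory for primitive non-negative matrices. First, I would introduce the substitution matrix $M\in\Rr^{n\times n}$ defined by $M_{ij}=$ (number of occurrences of $a_i$ in $\sigma(a_j)$). Because letter counts are additive under concatenation, $\vectv_{\sigma(w)}=M\vectv_w$ for every $w\in\aA^\ast$, and hence $\vectv_{\sigma^k(w)}=M^k\vectv_w$ by induction. Since $\wlen{u}=|\vectv_u|$ in the $1$-norm, the theorem reduces to showing $|M^k\vectv_w|=\Theta(\lambda^k)$ for a suitable $\lambda>1$.

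Primitivity of $(\aA,\sigma)$ says precisely that $(M^n)_{ij}>0$ for all $n>N$ and all $i,j$, so $M$ is a primitive non-negative matrix. Perron--Frobenius then yields a simple real eigenvalue $\lambda>0$ strictly dominating every other eigenvalue in modulus, together with strictly positive left and right eigenvectors $\vect{p}^\top$, $\vect{q}$, normalized so that $\vect{p}^\top\vect{q}=1$, and the standard limit $\lambda^{-k}M^k\to\vect{q}\vect{p}^\top$. To see that $\lambda>1$---the main obstacle I expect---I would apply expansiveness to the sequence $a,\sigma(a),\sigma^2(a),\ldots$ for some fixed $a\in\aA$ with $\wlen{\sigma(a)}>1$; iterating the definition of expansiveness forces $\wlen{\sigma^k(a)}\to\infty$, and primitivity then propagates this to every letter, so every column sum of $M^k$ tends to infinity. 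If $\lambda\le 1$, the convergence $M^k=\lambda^k(\vect{q}\vect{p}^\top+o(1))$ would force $\|M^k\|$ to stay bounded, a contradiction.

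Finally, for any non-empty $w\in\aA^\ast$ the vector $\vectv_w$ has non-negative integer entries with at least one positive, and since $\vect{p}$ has strictly positive entries the scalar $c_w:=\vect{p}^\top\vectv_w$ is strictly positive. Passing to the limit in $\lambda^{-k}M^k\vectv_w\to c_w\vect{q}$ and taking $1$-norms gives $|M^k\vectv_w|=c_w|\vect{q}|\,\lambda^k(1+o(1))$, which immediately yields $\wlen{\sigma^k(w)}=\Theta(\lambda^k)$. All of the real work is in extracting $\lambda>1$ from expansiveness; everything else is standard linear algebra once the substitution matrix is in place.
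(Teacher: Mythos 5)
Your proposal is correct and follows essentially the same route as the paper: form the substitution matrix, observe $\vectv_{\sigma^k(w)}=M^k\vectv_w$, and apply the Perron--Frobenius theorem for primitive matrices to extract the dominant eigenvalue and the $\Theta(\lambda^k)$ asymptotics. If anything, you are more careful than the paper at the one delicate point --- the paper simply asserts that Perron--Frobenius gives $\lambda>1$, whereas you correctly note that this requires expansiveness (via $\wlen{\sigma^k(a)}\to\infty$ for some letter $a$) on top of primitivity.
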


\begin{proof} Let $\vA$ be an $n\times n$ matrix --- a ``substitution matrix" for $\sigma$ ---  with each entry $(A_{ij})$, the number of occurrences of $a_i$ in $\sigma(a_j)$ (and thus non-negative). 

For a given word $w\in\aA^\ast$, note that $\vA \vectv_w = \vectv_{\sigma(w)}$, and so for each $k\in\Nn$, $\vA^k \vectv_w = \vectv_{\sigma^k(w)}$. 
Since $(\sigma,\aA)$ is primitive, there exists some $N\in\Nn$ such that for all $k>N$,  $\vA^k$ has all positive integer entries. By the Perron-Frobenius theorem,  $\vA$ has a real eigenvalue $\lambda>1$, that is strictly larger than the absolute value of any other eigenvalue.
Consequently, for any word $w\in\aA^\ast$, 
$\wlen{\sigma^{k}(w)}= |\vectv_{\sigma^{k}(w)}| = |\vA^k\vectv_w|=
 \Theta(\lambda^k \wlen{w})=\Theta(\lambda^k)$.  \end{proof}

\mpp{``distribution" is not a standard term; couldn't think of a better one.---Cgs}
\vp We call  $\lambda$ the \ddefine{growth rate} of $(\aA,\sigma)$. A \ddefine{distribution} of $(\aA,\sigma)$ is any left eigenvector $\nu\in\Rr^{1\times n}$ of $\vA$ corresponding to $\lambda$. Defining  $$\wlen{w}_\nu := \nu  \vectv_w$$ we   have the useful   $\wlen{\sigma(w)}_\nu=\nu  \vA \vectv_w  =\lambda \nu\vectv_w=\lambda\wlen{w}_\nu$.

\paragraph{Orbit graphs}\ 

Given a production system $(\aA, L, R)$, we construct for each orbit $(\omega^i,\anc_i)$, an \ddefine{orbit graph} with vertices  indexed by $i,j\in\Zz$,  labeled by each $\omega^i_j$. Edges connect the vertices $\omega^i_j$ with $\omega^i_{j'}$ and each $\omega^{i'}_j$ with  each $\omega^{i}_{\anc_{i}(j)}$. Clearly, every orbit graph is planar and the vertices are partitioned into ``rows", infinite paths corresponding to each $\omega^i$. As a planar graph, each face is   a cycle with vertices $\omega^i_{\anc_{i}(j)}$, $\omega^{i'}_j$, $\omega^{i'}_{j'}$, $\omega^i_{\anc_{i}(j')}$ --- each face is either a ``quadrilateral",  with four vertices,  or a ``triangle", with three, depending on whether $\anc_{i}(j') = (\anc_{i}(j))'$ or 
$\anc_{i}(j') = \anc_{i}(j)$. (Recall each $\anc_i$ is monotonic and onto.)

\begin{lem} Let $(\aA, \sigma)$ be an expansive primitive substitution system with an orbit $(\omega^i,\anc_i)$. Then any graph automorphism preserves the partition of the orbit graph $\Gamma$ into rows.\end{lem}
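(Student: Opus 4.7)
The plan is to characterize the row edges of $\Gamma$ as a graph-theoretic invariant preserved by automorphisms. Since $\Gamma$ is planar with every face a triangle or a quadrilateral, I would begin by arguing that $\Gamma$ has an essentially unique planar embedding---an infinite, $3$-connected analogue of Whitney's theorem---so that every graph automorphism $\phi$ preserves face types (triangles to triangles, quadrilaterals to quadrilaterals) and preserves the cyclic sequence of face-types around each vertex.

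Next I would analyze this local face pattern. Around $v=(i,j)$ with $k=\wlen{\sigma(\omega^i_j)}$ children, the cyclic sequence of incident face-types takes the form $F_1, F_2, Q, T^{k-1}, Q$, where the run of $k-1$ triangles comes from the faces between consecutive children of $v$, the two flanking $Q$'s are the quadrilaterals bridging a row-neighbor of $v$ to the outermost child on that side, and $F_1, F_2 \in \{T,Q\}$ are the two ``upper'' faces (each a triangle iff $v$ shares its parent with the corresponding row-neighbor). The two row edges at $v$ are then read off as the two edges immediately outside the flanking $Q$'s (on the side of the cyclic pattern opposite the $T^{k-1}$ block). Whenever this cyclic pattern is asymmetric---for instance when at least one of $F_1,F_2$ is a quadrilateral, or when $k\ne 3$---this identification is unambiguous, and $\phi$ must send the row edges at $v$ to the row edges at $\phi(v)$.

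The main obstacle is the locally symmetric case $k=3$, $F_1=F_2=T$, giving the cyclic pattern $T,T,Q,T,T,Q$ with $180^\circ$ rotational symmetry, which on its own does not distinguish the row edges from the parent edge. To handle this I would appeal to primitivity: every letter appears in $\omega$, and every production has a leftmost and a rightmost letter, so along every row there exist ``anchor'' vertices---boundary children, where the upper faces $F_1, F_2$ cannot both be triangles---whose cyclic face pattern is asymmetric and at which the row edges are unambiguously identified. A short consistency argument then propagates this identification: knowing one row edge at a vertex $v'$ forces the other (the edge separated from the first by the parent edge in the cyclic order), which in turn determines the row-neighbors of $v'$, and so the row partition extends across $\Gamma$ in a $\phi$-invariant way.
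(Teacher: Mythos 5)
Your strategy is genuinely different from the paper's, although both start from the same reduction: all faces of the orbit graph are triangles or quadrilaterals, and an automorphism should preserve face types. Where you analyze the cyclic face pattern in the star of each vertex and then propagate from ``anchor'' vertices, the paper argues globally: every quadrilateral lies in an infinite descending \emph{gallery} of quadrilaterals glued along shared row-edges, every triangle is incident to exactly one such gallery (attached along its row-edge, using the fact that primitivity and expansiveness put triangles to its left and right in the same row), and these galleries are transverse to the rows; an automorphism must preserve galleries and hence rows. The gallery argument entirely avoids your locally-symmetric cases and the propagation step. Both proofs share the prerequisite, which you at least name and the paper leaves implicit, that automorphisms preserve the facial structure; you invoke an infinite Whitney-type theorem but do not verify $3$-connectivity of the orbit graph.

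There are two concrete problems in your local analysis. First, your stated sufficient conditions for unambiguity are wrong. For a vertex with $k=1$ child the star has only four faces, in cyclic order $F_1,F_2,Q,Q$; when exactly one of $F_1,F_2$ is a quadrilateral, the pattern $T,Q,Q,Q$ admits a reflection (through the $T$ and the opposite $Q$) carrying the row-edge pair to the parent/child pair, and $Q,Q,Q,Q$ is completely symmetric. Expansive primitive substitutions can have letters with $\wlen{\sigma(a)}=1$ (e.g.\ Fibonacci), so such vertices occur; hence neither ``some $F_i$ is a quadrilateral'' nor ``$k\neq 3$'' guarantees local unambiguity, and your proposed anchors (boundary children) are not automatically unambiguous when $k=1$. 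Second, the propagation step is circular as phrased: at a symmetric vertex you cannot identify ``the parent edge'' before you have identified the rows. What rescues it is that the known row edge arrives together with a known \emph{upper} incident face (carried from the anchor), and the other row edge is the one reached from it by crossing exactly the two upper faces; with that bookkeeping, and with anchors taken to be vertices with $k\geq 2$ whose star pattern genuinely admits no symmetry moving the row-edge pair (these do exist in every row), your argument can be completed. As written, though, it has gaps that the paper's gallery argument does not.
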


(The hypothesis that our regular production system is an expansive primitive substitution system is stronger than necessary, but we do need the existence of lots of triangles in the graph.)

\begin{proof} Because $(\aA, \sigma)$ is expansive, for some $a\in\aA$, $\wlen{\sigma(a)}\geq 2$. Because $(\aA, \sigma)$ is primitive, each $\omega^i$ has infinitely many occurrences of each $a\in\aA$ --- consequently, each row contains infinitely many edges which are the side of some triangle.

Given any quadrilateral with vertices $\omega^i_{\anc_{i}(j)}$, $\omega^{i'}_j$, $\omega^{i'}_{j'}$, $\omega^i_{\anc_{i}(j')}$, it is adjacent to a quadrilateral sharing the edge $\omega^{i'}_j$, $\omega^{i'}_{j'}$ on the $i'$th row, which is in turn adjacent to a quadrilateral sharing an edge on the $(i+2)$nd row, etc. Each quadrilateral thus lies within an infinite ``gallery" of quadrilaterals. 

Moreover,  any triangle $\Delta$ in an orbit graph is incident to  such a gallery, beginning with its ``row-edge". Since there are triangles in this row to the left and right of $\Delta$, $\Delta$ is incident to exactly one such gallery, and this gallery is transverse to the rows in the graph. Consequently, any graph isomorphism must preserve these galleries, and so preserve the rows. 
\end{proof}

\paragraph{Orbit tilings}\ 

We next define, at least for informal understanding, a kind of dual construction, due to L. Sadun~\cite{sadun_personal, gs_rptt}, of tilings of $\Hh^2$, corresponding to orbits in expansive primitive symbolic substitution systems. The construction is simplest to describe in the less used but quite convenient ``horocyclic model" of the hyperbolic plane, consisting of points $(i,j)\in\Rr^2$ with metric $(ds)^2= (e^{-y}dx)^2 + (dy)^2$. \mpp{Tiles shrink going down.---Cgs} One may easily check that maps of the form $(x,y)\mapsto (x+c, y)$ and $(x,y)\mapsto (e^{d} x, y+d)$ are isometries.  For familiarity, the  map 
$(x,y)\mapsto (x+(e^{y})i)$ isometrically takes the horocyclic model to the upper-half plane model.

Given an expansive primitive symbolic substitution system $(\aA,\sigma)$, with growth rate $\lambda$ and any choice of distribution $\nu$, for each $a\in\aA$, define $a$-tiles to be rectangles 
of Euclidean height $\log \lambda$ and width $
e^{d} \wlen{a}_\nu$ where the base of the rectangle lies on the line $y=d$. Clearly all $a$-tiles are congruent. 

We next note that because $\wlen{\sigma(a)}_\nu=e^{(\log \lambda)}\wlen{a}_\nu$, we may fit any $a$-tile directly above a row of tiles labeled in $\sigma(a)$, and more generally, any row labeled in a word $w$ may fit above a word labeled in $\sigma(w)$, each $w_i$ fitting above each $\sigma(w_i)$. 

More generally still, given any orbit $(\omega^i,\anc_i)$ we may form tilings of the entire plane. For precision, choosing any arbitrary $c,d\in\Rr$, we take a Euclidean rectangle corresponding to each $\omega^i_j$, with upper left corner at $(c+U^i_j+S_i, d- i\log \lambda)$ of Euclidean width $e^d \lambda^{-i} \wlen{\omega^i_j}_\nu$ and height $\log\lambda$,  where 
$$\textstyle U^i_j = e^d\lambda^{-i} \ \overset{j}{\underset k \sum}\   \wlen{u^i_k}_\nu \textrm{\ \ and \ \ }
S_i = e^d\ \overset{i}{\underset j  \sum} \ \left(\lambda^{-j} \overset{n_j}{\underset k \sum}\ \wlen{u^j_k}_\nu\right)  \textrm{\ \ where \ \ }n_i= \min \anc_j^{-1}(0)$$ ($U^i_j$ measures the Euclidean distance, in the model, of the left side of the tile corresponding to $\omega^i_j$ from the left side of the tile $\omega^i_0$. $S_i$ measures the horizontal distance from the left side of $\omega^i_0$ from the left side of $\omega^0_0$. The upper left corner of the tile corresponding to $\omega^0_0$ lies at the point $(c,d)$.)

\mpp{Should we add a lemma, that this actually is a tiling, with rows labeled in $\omega^i$, etc ---Cgs}
\mpp{Add pictures and examples --- Cgs}

\section{Main Technical Lemma}

Before stating the main technical lemma, for any fixed $N\in\Nn$, on any language, we take the relation $w \approxN w'$ on words $w,w'$ to mean that there exist (possibly empty) words $c,p,p',s,s'$ such that  
$w=pcs$, $w'=p'cs'$ and $N>\wlen{p},\wlen{p'},\wlen{s},\wlen{s'}$ --- that is, the words are equal apart from some prefix and suffix of length at most $N$. 

We say that  the growth rates $\lambda$, $\gamma$ of primitive expansive substitution systems $(\aA,\sigma)$, $(\bB,\varsigma)$ are \ddefine{incommensurate}  if for no $m,n\in\Nn$ is $\lambda^m=\gamma^n$. Equivalently, $\log \lambda / \log \gamma\notin \Qq$.  Substitution systems generically have incommensurate growth rates, and every substitution system will be incommensurate with at least one of the systems defined by ${\tt 0}\To {\tt 00}$ or ${\tt 0}\To {\tt 000}$.

\begin{lem}[Main Technical Lemma]
\label{Lemma:Technical Lemma}
Given primitive expansive substitution systems $(\aA,\sigma)$, $(\bB,\varsigma)$ there exist $K,N\in\Nn$ and 
a regular production system $(\aAb,L,R)$ equipped with maps $\alpha:\aAb\To\aA$, 
$\beta:\aAb\To\bB^\ast$, $\delta:\aAb\To\{K,K-1\}$ with the following properties:
\begin{enumerate}
\item For any orbit $\{(\omega^{i},\anc_{i})\}$ of $(\aAb,L,R)$,  $\{(\alpha(\omega^{i}), \anc_{i})\}$ is an orbit of $(\aA,\sigma)$.
\item  $\delta(w_{i})=\delta(w_{j})$ for all $w\in L$ and $1\leq i,j\leq \wlen{w}$. Consequently, $\delta$ is well-defined on $L$ and on $L^\infty$.
\item For all $w,w'\in L$ with $(w,w')\in R$,  $\beta(w')\approxN\varsigma^{\delta(w)}(\beta(w))$.
\item If the growth rates of $(\aA,\sigma),(\bB,\varsigma)$ are incommensurate, then no orbit of $(\aAb,L,R)$ is periodic.
\item $(\aAb,L,R)$ does have orbits.
\end{enumerate}
\end{lem}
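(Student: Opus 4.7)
My plan is to construct $(\aAb,L,R)$ so that each row of an orbit carries, in addition to the $\aA$-word read off by $\alpha$, a ``shadow'' $\bB$-word read off by $\beta$, a common row-height $k\in\{K-1,K\}$ read off by $\delta$, and a drift state. The shadow will evolve row-to-row via $\varsigma^k$, with the choice of $k$ per row governed by a Sturmian-type mechanism so that the total $\varsigma$-growth tracks $\sigma$-growth on average. I set $\theta=\log\lambda/\log\gamma$ and $K=\lceil\theta\rceil$ so that $K-1<\theta\le K$, and fix left Perron eigenvectors $\nu,\mu$ for $\sigma,\varsigma$.

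The alphabet $\aAb$ will consist of quadruples $(a,u,k,d)$ with $a\in\aA$, $u\in\bB^{\le M}$, $k\in\{K-1,K\}$, and $d$ in a finite drift-state set $D$; the maps $\alpha,\beta,\delta$ project to the first three coordinates. The regular language $L$ will require a common $k$ across each word (giving (2)) and a local sliding-window constraint relating consecutive $d$'s and $u$'s. The rule $R$ will, for each letter $(a,u,k,d)$, list finitely many productions into sequences of $|\sigma(a)|$ letters whose $\alpha$-labels spell $\sigma(a)$, whose $\bB$-fragments concatenate exactly in the interior and with $<N$-slop at the row's boundary to $\varsigma^k(u)$, and whose common new height $k'$ and drift states are determined from $d$. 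Conditions (1)--(3) will then hold by design.

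The drift $d$ records the cumulative mismatch $\sum(k_i-\theta)$ to finite resolution; rules permit $k'=K$ only when $d$ is below a threshold and $k'=K-1$ only when above. Since each step shifts the drift by $|K-\theta|<1$ or $|K-1-\theta|<1$, a bounded $D$ suffices. For (5), I will construct an orbit explicitly: pick any $\sigma$-orbit $(\tilde\omega^i,\tilde\anc_i)$ of $(\aA,\sigma)$ (which exists by primitivity), assign heights by the genuine Sturmian rule $k_i=K$ iff $\{i(\theta-K+1)\}<\theta-K+1$, so that $K_n:=\sum_{i<n}k_i=n\theta+O(1)$, pair each row with an appropriately scaled segment of a fixed $\varsigma$-orbit of $(\bB,\varsigma)$, and distribute $\bB$-letters across the $\aAb$-letters as bounded fragments matching the $\nu$- and $\mu$-weights.

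For (4) I argue by contradiction: suppose an orbit has period $\pi$, so $\omega^0=\omega^\pi$ and hence $\beta(\omega^0)=\beta(\omega^\pi)$. For a large finite subword $w$ of $\omega^0$, its $\pi$-step descendant $W$ is a subword of $\omega^\pi=\omega^0$ with $|\alpha(W)|_\nu=\lambda^\pi|\alpha(w)|_\nu$, and iterating (3) over $\pi$ rows with total boundary slop bounded by some $C=C(\pi,N,\lambda,\gamma)$ yields $|\beta(W)|_\mu=\gamma^{K_\pi}|\beta(w)|_\mu+O(1)$, where $K_\pi=\sum_{i<\pi}k_i$. Because these descendant subwords partition $\omega^0$, comparing the asymptotic $\beta/\alpha$ density of $\omega^0$ with that of its partition into descendant subwords and letting $|w|\to\infty$ kills the additive slop and forces $\lambda^\pi=\gamma^{K_\pi}$, contradicting incommensurability. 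The main obstacle I anticipate is engineering $L$ and $R$ so that bounded-state drift tracking and regular gluing of $\bB$-fragments across adjacent letters can be simultaneously enforced by finitely many local rules --- a careful generalization of the Kari-style construction in \cite{gs_stap}.
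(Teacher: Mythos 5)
Your overall architecture coincides with the paper's: letters of $\aAb$ carry an $\aA$-letter, a bounded $\bB$-fragment, a height in $\{K,K-1\}$, and local gluing data; (3) follows by telescoping a per-letter production condition; (4) compares growth rates of a subword and its periodic descendants; (5) is an explicit overlay of one orbit of each system with the heights given by the Beatty sequence of $\theta=\log\lambda/\log\gamma$. But two genuine gaps remain. The first is the drift state. You propose a finite set $D$ recording $\sum_i(k_i-\theta)$ ``to finite resolution,'' but when $\theta\notin\Qq$ the exact accumulator takes infinitely many values in any bounded interval, so a finite $D$ forces a rounding whose effect on admissibility of the true Sturmian sequence you never control; and you explicitly defer the ``engineering'' of $L$ and $R$ that would make the $\bB$-fragments glue. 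That engineering is the actual content of the lemma, and the paper shows no drift counter is needed at all: each letter records which prefix $p$ and suffix $s$ of $\varsigma^{\delta}(\beta b)$ overhang the ends of the corresponding $\aA$-tile, subject to the length inequalities $\wlen{\beta_{(2\ldots)}}_\eta<\wlen{\alpha}_\nu<\gamma\wlen{\beta b}_\eta$ and their one-step images; the horizontal handshake $s(w_i)=p(w_{i'})$ together with the vertical rule $p(x)\beta(w)=\varsigma^{\delta(x)}(\beta(x))$ then forces the correct $\delta$-statistics implicitly. Without some such concrete finite encoding of the overhangs, your $L$ and $R$ are not actually defined, so properties (1)--(5) cannot yet be checked.

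The second gap is in (4). Taking one period and letting $\wlen{w}\to\infty$ yields $|\beta(W)|_\mu=\gamma^{K_\pi}|\beta(w)|_\mu+O(1)$ on one hand, and on the other only $|\beta(W)|_\mu=\Theta(\wlen{W})=\Theta(\lambda^\pi\wlen{w})=\Theta(\lambda^\pi|\beta(w)|_\mu)$, because the comparison between $\wlen{u}$ and $\wlen{\beta(u)}$ is multiplicative with constants that do not improve as $\wlen{w}$ grows. This bounds the ratio $\gamma^{K_\pi}/\lambda^\pi$ between two constants but does not force it to equal $1$. You must iterate: the paper takes the $k\pi$-th descendants $u^{k\pi}$ of a single letter, gets $\wlen{u^{k\pi}}=\Theta(\lambda^{k\pi})$ and $\wlen{\beta(u^{k\pi})}=\Theta(\gamma^{k\Delta})$, and concludes $\lambda^\pi=\gamma^\Delta$ from the fact that $s^k=\Theta(t^k)$ implies $s=t$. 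With that repair, and with the gluing data made explicit as above, your plan becomes the paper's proof.
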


\begin{proof} 
Let $\lambda, \gamma$ be the growth rates of $(\aA,\sigma),(\bB,\varsigma)$. Let $\nu\in\Rr^{|\aA|},\eta\in \Rr^{|\bB|}$ be  distributions for $(\aA,\sigma),(\bB,\varsigma)$, scaled so that $\nu_i>\gamma\ \eta_j$ for each $a_i\in\aA$, $b_j\in\bB$.  That is,  $\wlen{a}_\nu>\gamma\wlen{b}_\eta$ for all $a\in\aA,b\in\bB$. 

\psfrag{a}{$\alpha$}
\psfrag{b}{$\beta_1$}
\psfrag{c}{$\beta_{2\ldots}$}
\psfrag{d}{$b$}
\psfrag{e}[r][r]{$\varsigma^\delta(\beta b)=$}
\psfrag{p}{$p$}
\psfrag{q}{$q$}
\psfrag{r}{$r$}
\psfrag{s}{$s$}
\psfrag{t}{$t$}
\centerline{\includegraphics[height=2in]{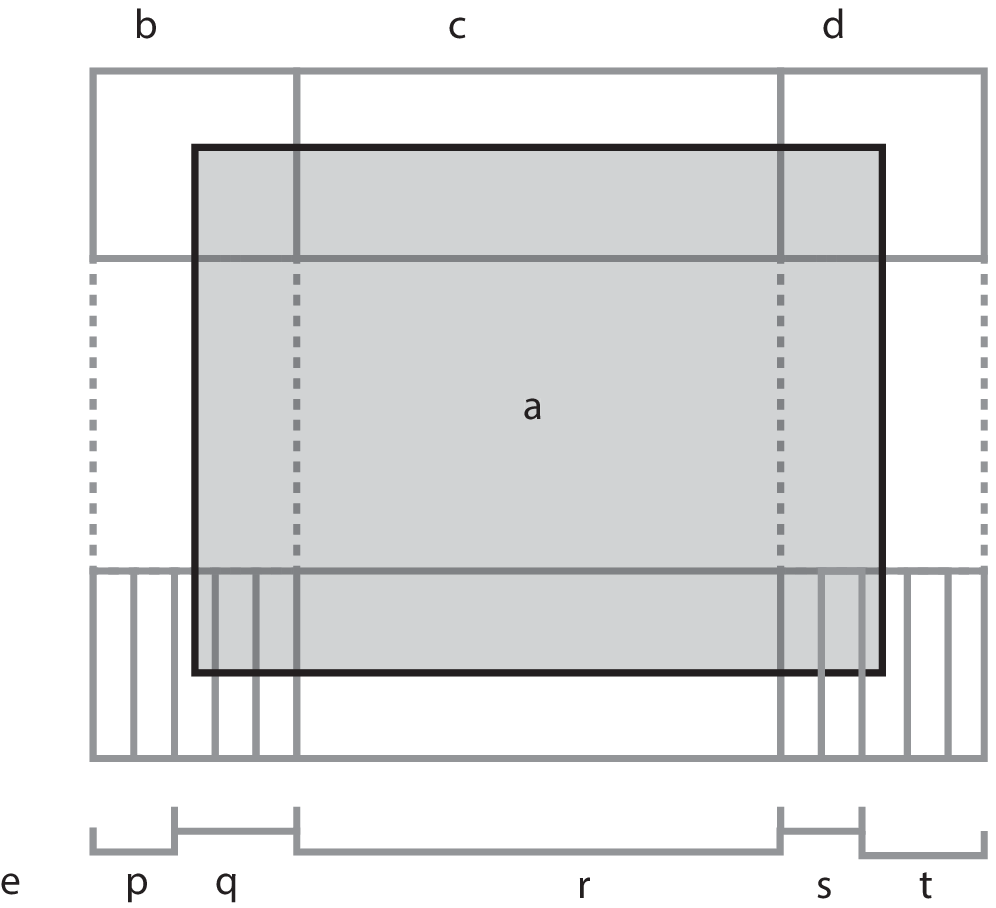}}

The basic idea is very simple: We overlay orbit tilings for $(\aA, \sigma)$ and $(\bB,\varsigma)$, reading off all possible ways that tiles in the latter can meet tiles in the former. Combinatorially, there are only finite many such possibilities, which we define as our alphabet $\aAb$ (choosing the relevant information we'll need). Our definition of $L$ captures   the correct (and rather simple) combinatorics along rows and our production rules $R$ describes  how each row may fit against the next locally. If we ignore the additional information we have placed on our tiles, we recover our original $\aA$-tiling--- that is, our $\aAb$-tiling is a marked $\aA$-tiling. And  we show that these additional markings do not allow any periodic tiling, simply because of the incompatibility of the growth rates, that is, the vertical spacing of the rows in the two tilings.

\vp
We may think of $\aAb$ as simply all possible ways for tiles in a $\bB$-tiling to cover a tile
in an $\aA$-tiling. Because $\wlen{a}_\nu>\gamma\wlen{b}_\eta$ for each $a\in\aA,b\in\bB$, each horizontal $\aA$ edge meets at least $3$ different $\bB$-tiles.
Considering a tile $T$ labeled $\alpha\in\aA$, with  width $\wlen{\alpha}_\nu$,   $\beta$  is any word corresponding to a string of $\bB$-tiles that cover the upper left corner of $T$, do not extend beyond the right of $T$, but with the addition of one more tile (labeled $b$) does extend beyond the right of $T$. The bottom of tile $T$ is $\delta=K$ or $(K-1)$ rows further below the row of $\beta$, and $\beta b$ is above a string of tiles $\varsigma^\delta(\beta b)$, which we partition into strings labeled $p,q,r,s,t$: $p$ is the maximal string to the left of $T$; $q$ is the rest of $\varsigma^{\delta}(\beta_1)$; $r = \varsigma^\delta (\beta_{2\ldots})$; and  $\varsigma^\delta(b)$ is partitioned into $s,t$ where $s$ is the maximal string entirely inside of $T$.  From $\alpha, \beta, \delta, p$ and $s$ any remaining combinatorics are completely implied.

Adjacent letters in any word in $L$ need only satisfy that the number of rows $\delta$ match, and that $s$ of a tile on the left matches $p$ of a tile on  the right. The production rules in $R$ only require that the sequence of $\beta$'s of the children match the string  $p q r$ of the parent. 

In this definition of $(\aAb,L,R)$, we simply take all possible such strings, constrained only by lengths measured in $\wlen{ }_\eta$ and $\wlen{ }_\nu$; properties (1)-(3) in the lemma are essentially trivial from the definition. It is more interesting that with no more care, just from the incompatibility of the growth rates of the two systems, that there are no periodic orbits. Finally, we must actually construct orbits --- we take care to do this explicitly, checking that our definitions are satisfied.

\paragraph{Defining $(\aAb,L,R)$, $\alpha, \beta, p,s, \delta$, $K$ and $N$}  \

Let $K:=\left\lceil \frac {\log \lambda}{\log \gamma}\right\rceil$.
We now define $\aAb$ as the set of all $(\alpha,\beta,p,s,\delta)\in\aA\times\bB^\ast\times\bB^\ast\times\bB^\ast\times\{K,K-1\}$ such that:
\begin{itemize}
\item For some $b\in\bB$, $\wlen{\beta_{(2\ldots)}}_\eta<\wlen{\alpha}_\nu<\gamma \wlen{\beta b}_\eta$
\item $\varsigma^\delta(\beta b) = p q r s t$ where $\varsigma^\delta(\beta_1) = p q$, $\varsigma^\delta(b) = s t$, and 
$\wlen{q_{(2\ldots)}r s}_\eta<\wlen{\sigma(\alpha)}_\nu<\gamma \wlen{q r s t_1}_\eta$
\end{itemize}
Note that $\aAb$ is finite and non-empty. For each $x=(\alpha_x,\beta_x,p_x,s_x,\delta_x)\in\aAb$,  define $\alpha(x)=\alpha_x, \beta(x)=\beta_x, p(x)=p_x, s(x)=s_x$ and $\delta(x)=\delta_x$. Take any $\displaystyle N>\max_{x\in\aAb} \wlen{p(x)}, \wlen{s(x)}$.

We define $L\subset \aAb^\ast$ by requiring that any length $2$ subword $w_{ii'}\in \aAb^\ast$ of any word $w$ in $L$ must satisfy  \begin{itemize}\item $s(w_i)=p(w_{i'})$ \item and  $\delta(w_{i})=\delta(w_{i'})$. \end{itemize}

We define production rules $R\subset \aAb\times L$ to be the pairs $(x,w)$ satisfying \begin{itemize} \item $p(x)\beta(w)=
\varsigma^{\delta(x)}(\beta(x))$ \item and $\sigma(\alpha(x))=\alpha(w)$ and extend  to production relations $R\subset L\times L, L^\infty\times L^\infty$.
\end{itemize}

\paragraph{Verifying  properties (1)-(3) of $(\aAb,L,R),\alpha,\beta,p,s$, $\delta$, $K$ and $N$.} 
\ 

 (1)  For any $(x,y)\in R$, by definition, we have
$\sigma(\alpha(x))=\alpha(y)$; thus for any orbit $\{(\omega^{i},\anc_{i})\}$ of $(\aAb,L,R)$,  $\{(\alpha(\omega^{i}),\anc_{i})\}$ is an orbit of $(\aA,\sigma)$.

(2) By the definition of $L$, $\delta$ is constant on the letters of any word in $L$.

(3) Let $w,v\in L$ with $(w,v)\in R$ and let $n=\wlen{w}$. Then $v=w^1\ldots w^{n}$ with $(w_k,w^k)\in R$ for all $1\leq k\leq n$.  
 
We will show by induction that
$$p(w_1) \beta(w^1)\ldots \beta(w^k)
=\varsigma^\delta(w)(\beta(w_{(1\ldots k)}) s(w_k)$$ for each $1\leq k\leq n$.

When $k=1$, this equality follows from the definition of $R$. Under the inductive hypothesis that
$$p(w_1)\beta(w^1)\ldots\beta(w^{k-1})
=\varsigma^{\delta(w)}(\beta(w_{(1\ldots(k-1))}))s(w_{(k-1)})$$
we have
$$\begin{array}{rcl}
p(w_1)\beta(w^1)\ldots\beta(w^k) 
& = & p(w_1)\beta(w^1)\ldots\beta(w^{k-1})\beta(w^k) \\ \\
& = & \varsigma^{\delta(w)}(\beta(w_{(1\ldots(k-1))}))s(w_{(k-1)}) \beta(w^k) \\ \\ 
& = &  \varsigma^{\delta(w)}(\beta(w_{(1\ldots(k-1))}))p(w_{k}) \beta(w^k) \\ \\ 
& = & \varsigma^{\delta(w)}(\beta(w_{(1\ldots(k-1))}))\varsigma^{\delta(w)}(\beta(w_k))s(w_k)\\ \\ 
& = &  \varsigma^{\delta(w)}(\beta(w_{(1\ldots k)}))s(w_k)\end{array}$$

for all $k$, and  in particular for $k=n$. Since  $N>\wlen{p(w_1)},\wlen{s(w_n)}$, 
$\beta(v) \approxN  \varsigma^{\delta(w)}(\beta(w))$.

\paragraph{(4) No orbit in $(\aAb,L,R)$ is periodic}

For contradiction, suppose $\{(\omega^{i},\anc_{i})\}\subset (L^\infty\times \Zz)^\Zz$ is an orbit of $(\aAb,L,R)$ with period $\pi$.

Let $u^0 = \omega^0(0)$. For all $k\in\Nn$, let $u^k=\omega^k(m\ldots M)$ where $m,M$ are the minimum and maximum of $\anc^{-1}_0 \circ\ldots \circ \anc^{-1}_{k-1} (0)$;  in other words, each $u^k$ is the 
 subword of $\omega^k$ with $(u^{k-1},u^k)\in R$, inductively, the ``$k$th descendant" of $u^0$.

Now $$\alpha(u^{k\pi})=\sigma^{k \pi} (\alpha(u^0)),$$
so that
$$\wlen{u^{k \pi}}=\Theta(\lambda^{k\pi}\wlen{u^0})=\Theta(\lambda^{k\pi}).$$

Let  $\Delta = \delta(u^0) + \ldots + \delta(u^{(\pi-1)})$. Because the orbit has period $\pi$, for all $k\in\Zz$, 
$\Delta = \delta(u^{k\pi}) + \ldots + \delta(u^{(\pi-1+k\pi)})$ and so 
$$\beta(u^{k\pi})\approxN\varsigma^{k\Delta}(\beta(u^0)),$$ 
and hence
$$\wlen{\beta(u^{k\pi})}=\Theta(\gamma^{k\Delta}\wlen{\beta(u^0)})=\Theta(\gamma^{k\Delta}).$$

On the other hand,  for all $u\in\aAb^\ast$, $\wlen{u}\leq \wlen{\beta(u)}\leq C \wlen{u}$ where $C= {\displaystyle \max_{a\in\aAb}}\wlen{\beta(a)}$, and so 
we have
$$\wlen{\beta(u^{k\pi})}=\Theta(\wlen{u^{k\pi}})
=\Theta(\lambda^{k\pi}).$$

But if $\Theta((\gamma^\Delta)^k)=\Theta((\lambda^\pi)^k)$, then we must have that $\lambda^{\pi}= \gamma^{\Delta}$, contradicting our hypothesis that $(\aA,\sigma)$, $(\bB,\varsigma)$ are incommensurate.

\paragraph{(5) Constructing an orbit in $(\aAb,L,R)$}
\

We have yet to show that there actually {\em are} any orbits in $(\aAb,L,R)$. 

In fact, we have great latitude:  let $\{(u^{i},\anc_{i})\}$ be an orbit of $(\aA,\sigma)$, let $\{(v^{i},\anct_{i})\}$ be an orbit of $(\bB,\varsigma)$, and let $c,d\in\Rr$, with which we may offset the orbit in $\bB$ against the orbit in $\aA$.  Given these parameters, we will construct letters $\omega_j^i$, show these  are correctly defined in $\aAb$, show that each $\omega^i=...\omega^i_{-1}\omega^i_0\omega^i_1\ldots$ is a word in $L^\infty$, and then show that each $(\omega^i,\omega^{i'})\in R$ relative to $\anc_i$. 

\paragraph{Defining $\delta_i$: the number of $\bB$ rows intersecting row $u^i$.} First we will establish some notation for measuring the orbits against one another; as discussed above, we may think of these as an  $\aA$-tiling overlaid with a $\bB$-tiling,  with an arbitrary offset of $c$ horizontally and $d$ vertically.
Recall that  $\lambda, \eta$ are the growth rates of $(\aA,\sigma)$ and $(\bB,\varsigma)$, and  $\nu, \eta$ are 
distributions scaled so that $\nu_a>\gamma\ \eta_b$ for each $a\in\aA$, $b\in\bB$.
For each $i,j\in\Zz$, let  $$\textstyle U^i_{j} :=\lambda^{-i} \ \overset{j}{\underset k \sum}\   \wlen{u^i_k}_\nu  $$ (We may think of $U^i_{j}$ as the distance to the left of  $u^i_j$ from the left of $u^i_0$, measured relative to the $\nu$, scaled to the horocyclic model.) 
Let  $$\Delta_{i}:=\lfloor (d+i\log\lambda)/\log\gamma\rfloor$$ --- that is, 
$\gamma^{\Delta_i} \leq  e^d\lambda^i  < \gamma^{(\Delta_i)'}$
--- and let $$\delta_i: =\Delta_{i'}-\Delta_{i}$$ 
(Thus the $\Delta_i$th row of the $\bB$ tiling contains the top edge of the $i$th row of the $\aA$ tiling, and $\delta_i$ is the number of rows in the $\bB$ tiling that meet the $i$th row, but not the bottom edge of the $i$th row, in the $\aA$-tiling.)

 Now $\delta_i =\Delta_{i'}-\Delta_{i}=\lfloor \frac{d+i\log\lambda}{\log\gamma}+\frac{\log\lambda}{\log\gamma}\rfloor- \lfloor \frac{d+i\log\lambda}{\log\gamma}\rfloor$
and so $\frac {\log \lambda}{\log \gamma}-1< \delta_i <\frac {\log \lambda}{\log \gamma}+1$.
Recalling that $K= \left\lceil \frac {\log \lambda}{\log \gamma}\right\rceil$, we have $\delta_i \in\{K,K-1\}$.

\paragraph{Defining $\nabla^i_j$: the index of the first $v^{\Delta_i}$ tile meeting $u^i_j$.} Next, there must exist some $\nabla^i_j\in \Zz$ satisfying 

$$\gamma^{-\Delta_i}\  \overset{\nabla^i_j}{\underset k \sum}\  \wlen{v^{\Delta_i}_k}_\eta 
\leq e^{-d}\ U^i_{j} + c \leq  \gamma^{-\Delta_i}\   \overset{(\nabla^i_j)'}{\underset k \sum}\  \wlen{v^{\Delta_i}_k}_\eta $$

(That is, $\nabla^i_j$ is the index of the letter in $v^{\Delta_i}$ just to the left, inclusively, of $u^i_j$, shifted by $d$ vertically and $c$ horizontally in the horocyclic model.)

Let $$V^i_j  :=  \gamma^{-\Delta_i}\  \overset{\nabla^i_j}{\underset k \sum}\  \wlen{v^{\Delta_i}_k}_\eta \textrm{\ \ and \ \ } W^i_j:=\gamma^{-\Delta_i}\   \overset{(\nabla^i_j)'}{\underset k \sum}\  \wlen{v^{\Delta_i}_k}_\eta$$
(We may think of $V^i_j$, $W^i_j$ as the distances to the left and right of $v^{\Delta_i}_{\nabla^i_j}$ from $v^{\Delta_i}_0$, scaled in the horocyclic model.)

Note that  $V^i_{j'}>W^i_j$ since  $\wlen{a}_\nu>\gamma\wlen{b}_\eta$ for all $a\in\aA,b\in\bB$. Moreover,  for all $k\geq j$, 
$$ \begin{array}{cccccccc} V^i_{k'}-W^i_j & \leq & e^{-d}(U^i_{k'}-U^i_j)  & < & W^i_{k'}-V^i_j \end{array}$$ and thus, using $\gamma^{\Delta_i}\leq e^{d}\lambda^{i}<\gamma^{\Delta'_i}$, we have
$$ \begin{array}{cccccccc} \gamma^{\Delta_i}(V^i_{k'}-W^i_j) & \leq & \lambda^i(U^i_{k'}-U^i_j)  & < & \gamma^{(\Delta_i)'}(W^i_{k'}-V^i_j) \end{array}$$ That is, 

\vp
\begin{equation}\label{UsefulEquation}\wlen{v^{\Delta_i}((\nabla^i_j)'\ldots \nabla^i_k)}_\eta \leq \wlen{u^i(j\ldots k)}_\nu<\gamma\wlen{v^{\Delta_i}(\nabla^i_j\ldots (\nabla^i_k)')}_\eta\end{equation}
as illustrated below for the $\Delta_i$th row of the $\beta$ tiling:

\vp

\psfrag{a}[c][c]{$v^{\Delta_i}((\nabla^i_j)'\ldots \nabla^i_k)$}
\psfrag{b}[c][c]{$u^i(j\ldots k)$}
\psfrag{c}[c][c]{$v^{\Delta_i}(\nabla^i_j\ldots (\nabla^i_k)')$}
\centerline{\includegraphics{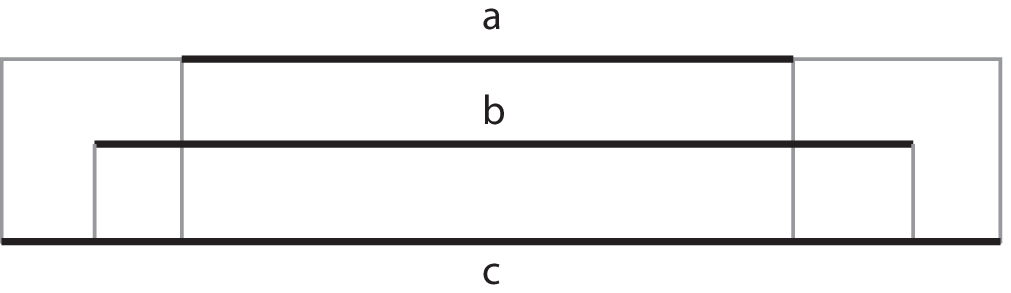}}

\vp
\paragraph{Defining $\omega^i_j$.} For each $i,j\in\Zz$   we now define $\omega^i_j$, giving $\alpha(\omega^i_j), \beta(\omega^i_j), p(\omega^i_j)$, $s(\omega^i_j)=p(\omega^i_{j'})$ (and letting $\delta(\omega^i_j)= \delta_i$). We will show each satisfies the definition of $\aAb$, that each $\omega^i(jj')\in L$ and so each $\omega^i\in L^\infty$. We will show each  $(\omega^i_j,\omega^{i'}_{\anc_i^{-1}(j)})$ satisfies the definition of $R$ and so too then $(\omega^i,\omega^{i'})\in R$ with respect to $\anc_i$--- thus $(\omega^i,\anc_i)$ is an orbit of $(\aAb,L,R)$, concluding the construction.

For each $i,j\in\Zz$, we define $$\alpha(\omega^i_j):=u^i_j\in\aA$$ 
Define $$\beta(\omega^i_j):=v^{\Delta_i}({{\nabla^i_{j}}\ldots (\nabla^i_{j'}-1)})$$ 
Thus, taking $k=j$ in equation~(\ref{UsefulEquation}) above, 
$$\wlen{\beta(\omega^i_j)_{(2\ldots)}}_\eta < \wlen{\alpha(\omega^i_j)}_\nu < \gamma \wlen{\beta(\omega^i_j)\beta(\omega^i_{j'})_1}_\eta
$$ satisfying the conditions on $\beta$.

Let $\anct_i^{-\delta_i} := \anct_i^{-1} \circ \ldots \circ \anct_{(i+\delta_i-1)}^{-1}$ 
and define  $m^i_j, n^i_j$ and $M^i_j$ as:

$$m^i_j := \min \anct^{-\delta_i}_{\Delta_i}(\nabla^i_j)$$ 
$$M^i_j := \max \anct^{-\delta_i}_{\Delta_i}(\nabla^i_j)$$
the indices of the first and last letters of $\varsigma^{\delta_i}(v^{\Delta_i}_{\nabla^i_j})$ in $v^{\Delta_{i'}}$, and
$$n^i_j := \min \anc_i^{-1}(j)$$ the index of the first letter of $\sigma(u^i_j)$ in $u^{i'}$ --- note $m^i_j\leq  \nabla^{i'}_{n^i_j} <  M^i_j < m^i_{j'}$. Moreover 
$$\varsigma^{\delta_i}(\beta(\omega^i_j))=v^{i'}(m^i_j\ldots \nabla^{i'}_{n^i_j} \ldots M^i_j \ldots (m^i_{j'}-1))$$
and in particular 
$$\varsigma^{\delta_i}(\beta(\omega^i_j)_1)=v^{i'}(m^i_j\ldots M^i_j )$$
If $\nabla^{i'}_{n^i_j}\gneq m^i_j$ let 
$$p(\omega^i_j)= v^{\Delta_{i'}}({m^i_j}\ldots (\nabla^{i'}_{n^i_j}-1))$$ and 
let $p(\omega^i_j)$ be the empty word otherwise. 

 Letting 
 $p=p(\omega^i_j)$, 
 $q=v^{\Delta^{i'}}(\nabla^{i'}_{n^i_j}\ldots M^i_j)$, 
 $r = v^{\Delta^{i'}}((M^i_j)'\ldots (m^i_{j'}-1))$, 
 $s=s(\omega^i_j) = p(\omega^i_{j'})$, and 
 $t = v^{\Delta^{i'}}(\nabla^{i'}_{n^i_{j'}}\ldots M^i_{j'})$ 
 we now have 
$$\varsigma^{\delta_i}(\beta^{ij}\beta^{ij'}_1) = p q r s t, 
\textrm{\ \ \ } 
\varsigma^\delta(\beta^{ij}_1) = p q, 
\textrm{\ \ and \ \ } 
\varsigma^\delta(\beta^{ij'}_1) = s t$$
Since $\wlen{\sigma(\alpha^{ij})}_\nu = \wlen{u^{i'}(n^i_j\ldots (n^i_{j'}-1))}_\nu$, applying equation~(\ref{UsefulEquation}) we have 
$$\wlen{q_{(2\ldots)}r s}_\eta<\wlen{\sigma(\alpha^{ij})}_\nu<\gamma \wlen{q r s t_1}_\eta$$

Consequently, each $\omega^i_j$ is a well-defined letter in $\aA$. Moreover, as each $\delta(\omega^i_j)=\delta_i=\delta(\omega^i_j)$ and each $p(\omega^i_{j'})=s(\omega^i_j)$, we have that 
every $\omega^i(jj')\in L$ and every $\omega^i\in L^\infty$. Finally,  
each $(\omega^i_j,\omega^{i'}(\anc^{-1}_i(j)))$ is a production rule in $R$ since 
$p(\omega^i_j)\beta(\omega^{i'}(\anc^{-1}_i(j))) = \varsigma^{\delta_i}(\beta(\omega^i_j))$ and $\sigma(\alpha(\omega^i_j)) = \alpha(\omega^{i'}(\anc^{-1}_i(j)))$.

\end{proof}

We do not need and will not prove, but in fact

\begin{thm} Every orbit of $(\aAb,L,R)$ is of the form constructed in the proof above.\end{thm}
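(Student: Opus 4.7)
The strategy is to reverse-engineer, from a given orbit $\{(\omega^i,\anc_i)\}$ of $(\aAb,L,R)$, the data $(u,v,c,d)$ from which the construction in the proof of Lemma~\ref{Lemma:Technical Lemma}(5) recovers $\omega$, and then verify that construction returns $\omega$.

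First, set $u^i := \alpha(\omega^i)$; by property (1), $\{(u^i,\anc_i)\}$ is an orbit of $(\aA,\sigma)$. Set $\delta_i := \delta(\omega^i)$, well-defined by property (2), and $\Delta_0 := 0$, $\Delta_i := \sum_{k=0}^{i-1}\delta_k$ (with the obvious sign convention for $i<0$). Next, for each $i\in\Zz$, concatenate the finite words $\beta(\omega^i_j)$ over $j\in\Zz$ to build a bi-infinite $\bB$-word $v^{\Delta_i}$, indexed so that the first letter of $\beta(\omega^0_0)$ sits at position $0$ of $v^0$; for $i\neq 0$, the shift of $v^{\Delta_i}$ is forced by the production-rule identity $p(x)\beta(w)=\varsigma^{\delta(x)}(\beta(x))$ applied successively along parent-child pairs in $\omega$. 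Fill in the intermediate heights $v^k$ for $\Delta_i<k<\Delta_{i'}$ by iterating $\varsigma$, reading off parent functions $\anct_k$ compatibly; this yields an orbit $\{(v^k,\anct_k)\}$ of $(\bB,\varsigma)$.

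To recover $d$, I would use the per-letter bounds $\wlen{\beta_{(2\ldots)}}_\eta<\wlen{\alpha}_\nu<\gamma\wlen{\beta b}_\eta$ from the definition of $\aAb$. Summed over the descendants of $u^0_0$ in row $i$, these together with the exact identities $\wlen{\sigma^i(u^0_0)}_\nu = \lambda^i\wlen{u^0_0}_\nu$ and $\wlen{\varsigma^{\Delta_i}(\beta(\omega^0_0))}_\eta = \gamma^{\Delta_i}\wlen{\beta(\omega^0_0)}_\eta$ pin down $\gamma^{\Delta_i}/\lambda^i$ up to an $O(\lambda^{-i})$ error, forcing it to converge as $|i|\to\infty$. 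I take $d$ to be the logarithm of this limit, and $c$ by an analogous horizontal computation on the $\nabla^i_j$. Finally, applying the Part (5) construction to $(u,v,c,d)$, a direct computation shows the resulting $\Delta_i$, $\delta_i$, $\nabla^i_j$, $m^i_j$, $M^i_j$, $n^i_j$ all agree with those read off from $\omega$, reproducing each $\omega^i_j$ letter by letter.

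The main obstacle is the third step: showing that the $d$ defined via the above limit actually satisfies the strict floor conditions $\gamma^{\Delta_i}\leq e^d\lambda^i<\gamma^{(\Delta_i)'}$ simultaneously for every $i\in\Zz$ (and similarly for $c$). The tightness of the length bounds in the definition of $\aAb$ is precisely what is needed --- any violation of these inequalities would accumulate through the orbit's iterated production relations into a contradiction with the actual lengths of the $\beta$-words occurring in $\omega$.
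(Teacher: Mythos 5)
The paper itself offers no proof of this theorem (``We do not need and will not prove''), so there is nothing to compare your argument against; I can only assess it on its own terms. Your first two steps are sound and are surely how any proof must begin: $u^i:=\alpha(\omega^i)$ is an orbit of $(\aA,\sigma)$ by property (1), and the telescoping identity $p(w_1)\beta(w^1)\cdots\beta(w^k)=\varsigma^{\delta(w)}(\beta(w_{(1\ldots k)}))s(w_k)$ from the verification of property (3) does let you assemble the concatenated $\beta$-rows into an orbit of $(\bB,\varsigma)$ after interpolating powers of $\varsigma$.

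The gap is in the recovery of $d$, and the specific mechanism you propose is false. You claim the length inequalities pin down $\gamma^{\Delta_i}/\lambda^i$ to within $O(\lambda^{-i})$, forcing convergence. But the inequalities defining $\aAb$ (e.g.\ $\wlen{\beta_{(2\ldots)}}_\eta<\wlen{\alpha}_\nu<\gamma\wlen{\beta b}_\eta$) each carry slack of order $1$, and summing them over the $\Theta(\lambda^i)$ descendants of $u^0_0$ leaves a \emph{multiplicative} window of width roughly $\gamma$ for the ratio, not an additive $O(\lambda^{-i})$ error. Indeed the quantity $\gamma^{-\Delta_i}\wlen{\beta(u^i)}_\eta$ converges (the boundary corrections form a geometric series), but that tells you nothing about $\gamma^{\Delta_i}/\lambda^i$ itself. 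Worse, for the orbits actually constructed in part (5) one has $\gamma^{\Delta_i}/\lambda^i=e^{d}\gamma^{-\{(d+i\log\lambda)/\log\gamma\}}$, which by equidistribution (here incommensurability works \emph{against} you) is dense in $[e^{d}/\gamma,\,e^{d}]$ and does not converge --- so no limit-based definition of $d$ can be correct. What must actually be shown is that the intersection $\bigcap_{i}\bigl[\Delta_i\log\gamma-i\log\lambda,\ (\Delta_i+1)\log\gamma-i\log\lambda\bigr)$ is nonempty, i.e.\ that the sequence $(\Delta_i)$ read off from an arbitrary orbit is a Beatty sequence of slope $\log\lambda/\log\gamma$; crude length estimates only bound the oscillation of $\Delta_i\log\gamma-i\log\lambda$ by a constant, not by $\log\gamma$. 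Establishing this (and the analogous nonempty intersection of the horizontal constraints determining $c$, which is coupled to the vertical one through the $p$, $s$ and $\nabla$ data) is the real content of the theorem; a finite-intersection/compactness argument over finite blocks of rows seems needed, and your proposal does not supply it.
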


\section{Strongly aperiodic subshifts of finite type on surface groups}
We will now define a strongly aperiodic subshift of finite type on a hyperbolic surface group. The idea is that there is a primitive symbolic substitution system which in some sense describes the combinatorics of the tiling of $\mathbb{H}^{2}$ by the fundamental domain of a surface group. Applying Lemma \ref{Lemma:Technical Lemma} will then allow us to produce a strongly aperiodic subshift of finite type on the dual graph of this tiling, which is just the Cayley graph of the surface group.

\subsection{The $\{p,q\}$-graph, periods, and subshifts of finite type.}

The $\{p,q\}$-graph $\Gpq$ is the unique planar graph such that every vertex has degree $q$ and every complementary region (in the plane) is bounded by a $p$-cycle. When $p=q=4g$, this $\Gpq$ will be isomorphic to an unmarked unlabeled Cayley graph $\Gsurf$ of the genus $g$ surface group. 

We will extend the notion of a subshift of finite type from Cayley graphs to graphs of bounded degree. In fact, our definition will be slightly more restrictive, since an arbitrary graph carries no natural framing on its vertices. After discussing what it means for such an object to be strongly aperiodic, we will define a primitive substitution system whose orbit graphs each contain a naturally embedded copy of the $\{p,q\}$-graph which fills out every vertex. This substitution system will be critical in what follows.

\paragraph{Labeled graphs and subshifts of finite type.} Given an alphabet $\aA$, an \ddefine{$\aA$-labeling }  of a graph $\Gamma$ with vertices $\vertices(\Gamma)$ is a map $\labelmap\in\aA^{\vertices(\Gamma)}$. An \ddefine{ $\aA$-labeled graph} $(\Gamma,\labelmap)$ is a graph $\Gamma$ together with an $\aA$-labeling $\labelmap$ of it. 
An \ddefine{ $\aA$-pattern} is a  finite $\aA$-labeled graph $(\Delta,\labelmap)$ with a specified basepoint $V_0(\Delta)\in\vertices(\Delta)$. 

A \ddefine {labeled isomorphism} $\Phi:(\Gamma_1,\labelmap_1)\to (\Gamma_2,\labelmap_2)$, with $\labelmap_1$ an $\aA$-labeling and $\labelmap_2$ a $\bB$-labeling on alphabets $\aA$ and $\bB$,
 consists of a  graph isomorphism $\Phi:\Gamma_1\to\Gamma_2$, together with a map $\Phi^{*}:\aA\to\bB$, such that $\labelmap_2(\Phi(v)):=\Phi^*(\labelmap_1(v))$. If $\aA=\bB$ then $\Phi^*$ is typically understood to be the identity. 

Given a graph $\Gamma$ and a finite collection $\fF$ of finite $\aA$-labeled graphs, 
a \ddefine{subshift of finite type} $X(\Gamma,\fF)$  on $\Gamma$  consists of all 
$\aA$-labelings $\labelmap$ of $\Gamma$, such that every vertex $v$ 
of $\Gamma$ lies within an  subgraph $\Gamma'$ so that there is a labeled isomorphism $\Phi$ from  
 $(\Gamma',\labelmap|_{\Gamma'})$ to some $(\Delta, \labelmap')\in\fF$, with 
  $\Phi(v) = V_0(\Delta)$. That is,  in effect, every vertex is the basepoint of some pattern specified in $\fF$.

Given $\labelmap\in X(\Gamma,\fF)$, a period of $\labelmap$ is any labeled isomorphism $\Phi:(\Gamma,\labelmap)\To(\Gamma,\labelmap)$.  We say that $X(\Gamma,\fF)$ is strongly aperiodic if no labeling in $X$ has a nontrivial period.

\paragraph{Cayley graphs.} Note that if $\Gamma$ is the (unlabeled) Cayley graph of a group $G$, then every subshift of finite type $X$ on $\Gamma$ induces a subshift of finite type $\tilde{X}$ on $G$, and any period of $\tilde{X}$ induces a period of $X$. However, a subshift of finite type on $G$ will not always be a subshift of finite type on $\Gamma$ because forbidden patterns on $G$ can use framing data which is not available on $\Gamma$.

\paragraph{A primitive substitution system encoding the $\{p,q\}$-graph.} 

For the rest of this section, fix $p,q\geq 5$, 
 fix $\aA=\{\tX,\tY\}$ and fix
$$\sigma(\tX)=(\tX\tY^{p-3})^{q-4}\tX\tY^{p-4},$$
$$\sigma(\tY)=(\tX\tY^{p-3})^{q-3}\tX\tY^{p-4}.$$

Given a production system $(\aA, L, R)$,  for each orbit $(\omega^i,\anc_i)$, 
we construct a \ddefine{reduced orbit graph}  $\reducedorbitgraph 
$  with vertices  indexed by $i,j\in\Zz$,  labeled by each $\omega^i_j$. Edges connect the vertices $\omega^i_j$ with $\omega^i_{j'}$ and each $\omega^{i'}_j=\tX$
(rather than all $\omega^{i'}_j$)
with  each $\omega^{i}_{\anc_{i}(j)}$.  (That is, a reduced orbit graph corresponds to an orbit graph with the edges produced by $\tY$ vertices removed.) More generally, if $\orbit$ is an orbit of some RPS $(\aAO,L,R)$, then for any function $\mu:\aAO\To\aA$, we can define a corresponding reduced orbit graph where we remove vertical edges produced by $\mu^{-1}\{\tY\}$ vertices

\psfrag{a}[l][l]{$\tX\tY$}
\psfrag{b}[l][l]{$\tX\tY\tY\tX\tY\tY\tX\tY\ \tX \tY\tY\tX \tY\tY \tX \tY\tY \tX\tY$}
\centerline{\includegraphics{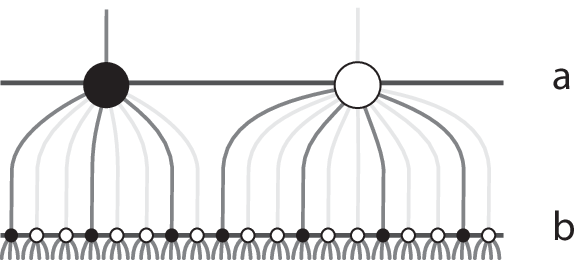}}

The following is easily verified and further discussion appears in~\cite{gs_stap}; the construction for the $\{5,6\}$ graph is shown above.

\begin{lem}
For all orbits $\orbit$ of $(\aA,\sigma)$, the reduced orbit graph $\reducedorbitgraph$ has underlying graph $\Gpq$.
\end{lem}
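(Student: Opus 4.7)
The plan is to verify the three defining properties of $\Gpq$ for the reduced orbit graph $\reducedorbitgraph$: it is planar, every vertex has degree $q$, and every face is a $p$-cycle; then invoke the uniqueness of $\Gpq$ (available here since $p,q\geq 5$ puts us in the hyperbolic regime $(p-2)(q-2)>4$) to conclude $\reducedorbitgraph\cong\Gpq$. Planarity is immediate from the orbit-graph construction (rows indexed by $i$, letters within a row by $j$), since deletion of edges preserves planarity; connectedness follows from primitivity, which forces each row to contain infinitely many copies of $\tX$ and hence infinitely many preserved upward edges between consecutive rows.

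For the degree count, at a vertex $v=\omega^i_j$ there are two horizontal neighbors in the same row, one upward edge iff $\omega^i_j=\tX$, and one downward edge to each $\tX$-letter in $\sigma(\omega^i_j)$. Reading directly from $\sigma(\tX)=(\tX\tY^{p-3})^{q-4}\tX\tY^{p-4}$ and $\sigma(\tY)=(\tX\tY^{p-3})^{q-3}\tX\tY^{p-4}$ gives $q-3$ and $q-2$ $\tX$-letters respectively, so the degree of an $\tX$-vertex is $2+1+(q-3)=q$ and of a $\tY$-vertex is $2+0+(q-2)=q$.

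The face count is the technical heart of the argument, and requires tracking how the triangular and quadrilateral faces of the full orbit graph merge under deletion of $\tY$-upward edges. The key combinatorial fact is that in both $\sigma(\tX)$ and $\sigma(\tY)$ the $\tX$-letters appear in arithmetic progression with common difference $p-2$, and each substitution terminates with the suffix $\tY^{p-4}$. From this I expect two types of merged face: an \emph{internal} face sitting under a single parent $\omega^i_j$, whose boundary consists of $\omega^i_j$ and $p-1$ children (two consecutive $\tX$-children enclosing $p-3$ intermediate $\tY$-children), for a total of $p$ vertices; and a \emph{boundary} face sitting under two consecutive parents $\omega^i_j,\omega^i_{j'}$, whose boundary consists of those two parents together with the last $\tX$-child of $\omega^i_j$, the trailing $p-4$ $\tY$-children, and the first $\tX$-child of $\omega^i_{j'}$, again $p$ vertices in total.

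The main obstacle will be the bookkeeping at the parent boundaries: one must verify that the merging consumes exactly the right number of $\tY$-vertices on each side, regardless of the labels of the two parents. Uniformity of the trailing $\tY^{p-4}$ suffix across both $\sigma(\tX)$ and $\sigma(\tY)$, together with the fact that both substitutions begin with $\tX$, is precisely what makes the boundary face size independent of parent labels and equal to $p$. Once the vertex and face conditions are established on the connected planar graph $\reducedorbitgraph$, the uniqueness of $\Gpq$ completes the identification.
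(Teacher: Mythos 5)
Your proposal is correct, and it supplies exactly the verification the paper omits (the paper's entire ``proof'' is the sentence ``The following is easily verified and further discussion appears in [gs\_stap]''): the degree count $2+1+(q-3)=q$ for $\tX$-vertices and $2+0+(q-2)=q$ for $\tY$-vertices checks out against $\sigma(\tX)$ and $\sigma(\tY)$, and your two face types (one parent with $p-1$ children between consecutive $\tX$-children spaced $p-2$ apart, versus two parents with the trailing $\tY^{p-4}$ plus the two flanking $\tX$-children) correctly account for how the $p-2$ triangles, respectively $p-4$ triangles plus one quadrilateral, of the full orbit graph merge into $p$-gons when the $\tY$-vertical edges are deleted. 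This is the intended argument, so no further comparison is needed.
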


If $\bB$ is any other primitive symbolic substitution system, then for any orbit $\orbit=(\omega^i,\anc_i)$ of the RPS $(\aAb,L,R)$ given by Lemma \ref{Lemma:Technical Lemma}, it follows that $\reducedorbitgraph$ has underlying graph $\Gpq$ (where we reduce with respect to $\alpha:\aAb\To\aA$ as in the statement of the Lemma). This is because of the first property promised by the Lemma---that $(\alpha(\omega^i),\anc_i)$ is an orbit of $(\aA,\sigma)$.

\subsection{A strongly aperiodic subshift of finite type on the $\{p,q\}$-graph.} 
Fix $(\bB,\varsigma)$ with growth rate incommensurable with that of $(\aA,\sigma)$ --- in fact taking $\bB=\{{\mathtt 0}\}$ and $\varsigma({\mathtt 0})={\mathtt 0}^k$ for any fixed $k>1$ will suffice --- and let $(\aAb, R,L)$ be as in Lemma \ref{Lemma:Technical Lemma}. We would like to say that the collection of $(\aAb,R,L)$ orbit graphs forms a subshift of finite type on $\Gpq$, and that this subshift of finite type is strongly aperiodic, but for technical reasons, we must adjust this plan as follows.

First we construct a primitive substitution system $(\Asharp,\sigmasharp)$ and a SFT $\XF$ on $\Gpq$ such that every configuration of $\XF$ is labeled isomorphic to a reduced orbit graph of some orbit $\orbit$ of $(\Asharp,\sigmasharp)$. In fact the system $(\Asharp,\sigmasharp)$ will be essentially the same as $(\aA,\sigma)$, but the alphabet $\Asharp$ will carry some extra data that allows us to recover the orbit graph structure of $\reducedorbitgraph$ knowing only the labels $\omega_\orbit:\Gpq\To\Asharp$. We then modify $(\aAb,R,L)$ to produce an RPS $(\aAO,R_0,L_0)$ and a SFT on $\Gpq$ whose configurations are isomorphic to $\aAO$ orbit graphs, and hence have no periods.

\paragraph{Defining $(\Asharp,\sigmasharp)$.} Let $N$ be $\wlen{\sigma(\tY)}$.

\begin{definition} Let $\Asharp=\aA\times\{1,\ldots N\}$, and for $a\in \aA$, let
$$\sigmasharp(a,i)=(\sigma(a)_1,1)(\sigma(a)_2,2)\ldots
(\sigma(a)_{\wlen{\sigma(a)}},\wlen{\sigma(a)}).$$
I.e., $\sigmasharp(a,i)$ ignores $i$ and writes out the letters of $\sigma(a)$, labeled by their place within the word.
\end{definition}

For a vertex $v\in\Gpq$, let $\Delta_v$ be the union of $p$-cycles containing $v$. Let $\Delta_{p,q}$ be an abstract graph isomorphic to $\Delta_v$.

\begin{lem}
Let $\fF$ be the collection of all patterns realized on $\Delta_v$ for $v$ any vertex of any $(\Asharp,\sigmasharp)$ reduced orbit graph.

Then for any $\omega\in \XF$, there exists an orbit $\orbit$ of $(\Asharp,\sigmasharp)$ such that
$$(\Gpq,\omega)\cong(\reducedorbitgraph,\omega_\orbit).$$
\end{lem}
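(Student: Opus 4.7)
The plan is to use the local-patch hypothesis, together with the positional second coordinate carried by $\Asharp$, to reconstruct a row/vertical edge decomposition of $\Gpq$ directly from $\omega$, and then read off an orbit of $(\Asharp,\sigmasharp)$ from that decomposition. The key observation is that $\fF$ consists of patches $(\Delta_v,\omega|_{\Delta_v})$ coming from actual reduced orbit graphs, so each such patch already ``remembers,'' in its labels, which two of $v$'s $q$ incident edges are row edges and which $q-2$ are vertical edges.

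First I would show that this row/vertical classification at $v$ is determined by the labeled patch itself. The position coordinate $i$ of a label $(a,i)\in\Asharp$ records $v$'s index within $\sigmasharp$ of its parent's letter, so two neighbors joined by a row edge are adjacent positions within a common parent's substitution word, while a vertical edge joins a letter to the unique neighbor at a parent position. I would verify this local rule on the finitely many patches in $\fF$, then check that on any edge $uv$ the rule gives the same answer whether read from $\Delta_u$ or from $\Delta_v$; this yields a globally well-defined partition of the edges of $\Gpq$ into row edges and vertical edges, with exactly two row edges at each vertex and vertical edges emanating upward only from $\tX$-labeled vertices.

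Next I would organize the row-edge subgraph into rows. Having degree two at every vertex, it decomposes as a disjoint union of cycles and bi-infinite paths; cycles can be excluded because the positional indices along any row in a reduced orbit graph advance in a strictly controlled way (incrementing within a parent, then resetting at parent boundaries dictated by the vertical edges), and a closed row-cycle would force a contradiction between this monotone local bookkeeping and the cyclic identification. Using the planar embedding of $\Gpq$ and the fact that vertical edges always cross between consecutive rows, I would index the rows by $\mathbb Z$ as $R_i=(\omega^i_j)_{j\in\mathbb Z}$ and read off a parent function $\anc_i:\mathbb Z\to\mathbb Z$ from the vertical edges between $R_{i+1}$ and $R_i$; its monotonicity and surjectivity follow from planarity plus the local rule. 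Finally, the production relation $\sigmasharp(\omega^i_j)=\omega^{i+1}|_{\anc_i^{-1}(j)}$ holds because the positional index in $\Asharp$ pins down each child's slot in its parent's substitution word with no ambiguity at the boundary between adjacent parents, so $(\omega^i,\anc_i)$ is an orbit of $(\Asharp,\sigmasharp)$ and the desired labeled isomorphism $(\Gpq,\omega)\cong(\reducedorbitgraph,\omega_\orbit)$ is immediate.

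The main obstacle I expect is the middle step: promoting the purely local row/vertical rule to a globally consistent partition and ruling out spurious row-cycles in $\Gpq$. Everything else is essentially bookkeeping once the positional indexing does its job, but the cycle-exclusion argument must carefully combine planarity of $\Gpq$ with the monotone behavior of position indices across parent boundaries, analogously to the role played by triangles and galleries in the earlier row-preservation lemma.
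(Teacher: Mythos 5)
Your overall strategy coincides with the paper's: recover a horizontal/vertical classification of the edges of $\Gpq$ from the positional labels, assemble the horizontal subgraph into rows indexed by $\Zz$, read off a parent function from the vertical edges, and verify the production relation. The first step is sound --- the paper formalizes your ``local rule'' as the statement that each $p$-cycle contains \emph{exactly one} path whose positional labels are of horizontal type (consecutive indices, length $p-1$ or $p-2$, endpoints constrained by the set $\fraks_a$ of $\tX$-positions), with uniqueness following because $i\in\fraks_a$ forces $i'$ and $i+2$ not to lie in $\fraks_a$ --- and your final verification step is indeed bookkeeping.

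The genuine gap is in your exclusion of row-cycles. You propose to rule out a finite (cyclic) component of the horizontal subgraph by appealing to ``monotone local bookkeeping'' of the positional indices, but the positional index is not monotone along a row: it increments within one parent's word and then \emph{resets to $1$} at each parent boundary, so nothing prevents the indices from returning to their starting value after traversing a closed loop; a periodic sequence of positional labels around a cycle is perfectly consistent with your local rule. The paper's argument is different and genuinely needed: it first proves that two vertices lie in the same component of the horizontal subgraph $\HG$ if and only if their parents do, so the parents of any finite component form a strictly smaller finite component, and choosing a minimal finite component yields a contradiction. (Your idea could be repaired into exactly this descent --- the number of parent-boundary resets around a cycle is the length of the parent cycle --- but as stated it is not a proof.) A second, smaller gap: you must also show that two distinct horizontal lines cannot sit at the same height, i.e.\ that each level set $y^{-1}(i)$ is a single line; ``planarity'' alone does not give this. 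The paper proves it by taking a path between two putative components of $y^{-1}(i)$ with a minimal number of vertical edges and using the parent proposition to replace an up--horizontal--down subpath by a purely horizontal one, contradicting minimality.
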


\begin{proof}
Before outlining the proof, we need to make an observation about $p$-cells in $\Asharp$ orbit graphs.

\begin{definition}
For $a\in A$, let $\fraks_a=\{i:\sigma(a)_i=\tX\}$. Given a sequence $n_1,\ldots,n_k$ of natural numbers, we say that this sequence is of horizontal type if the following hold.
\begin{itemize}
\item The $n_i$ are consecutive, i.e., $n_{i'}=(n_i)'$.
\item $k=p-1$ or $p-2$.
\item For $\wlen{\sigma(a)}=k$, we have $n_1$ and $n_k$ in $\fraks_a$.
\end{itemize}
\end{definition}

\paragraph{Key observation.} Letting $\Gamma_p$ be an abstract $p$-cycle,  let $\orbit$ be an orbit of $(\Asharp,\sigmasharp)$ and $\alpha:\Gamma_p\rightarrow \reducedorbitgraph$ a $p$-cycle in $\reducedorbitgraph$. Then we have the following.
\begin{itemize}
\item There exists {\it exactly} one oriented path $\gamma:\{1,\ldots,k\}\To \Gamma_p$ whose numerical labels $\omega_\orbit\circ\alpha\circ\gamma$ form a sequence of horizontal type.
\item If $\gamma$ has $p-1$ vertices, then all the vertices of this path are produced by the other vertex of the $p$-cycle.
\item Suppose $\gamma$ has $p-2$ vertices, $v_0$ is the vertex preceding $\gamma$ in cyclic order, and $v_1$ the vertex following $\gamma$ in cyclic order. Then the first $p-3$ vertices of $\gamma$ constitute the last (rightmost) $p-3$ vertices produced by $v_0$, and the last vertex of $\gamma$ is the first vertex produced by $v_1$.
\end{itemize}
It is easy to see that there is at least one such path as one of the vertices of the $p$-cycle must produce at least $p-3$ of the other vertices. On the other hand, it is clear that there is at most one such path with length $p-1$ and at most one with length $p-2$, as $i\in\fraks_a$ implies that $i'$ and $i+2$ are not in $\fraks_a$. We leave the rest to the reader.

Now we may proceed to outline the proof of the lemma. We will construct a bijection $\psi:\vertices(\Gpq)\To\Zz^2$ and $P:\Gpq\To\Zz$ such that the following hold.
\begin{itemize}
\item The function $\row_i:\Zz\To\Gpq$ defined by $\row_i(j)=\psi^{-1}(j,i)$ yields an infinite path in $\Gpq$.
\item The function $P_i:\Zz\To\Zz$ given by $P_i(j)=P(\row_{i'}(j))$ is a nondecreasing surjection.
\item $\omega|_{\row_{i'}(P_i^{-1}(j))}$ is the same word as $\sigmasharp(\omega(\psi^{-1}(j,i))$.
\item The neighbors of $\psi^{-1}(j,i)$ in $\row_{i'}\Zz$ are exactly the $\tX$-labeled vertices of $\row_{i'}(P_i^{-1}(j))$.
\end{itemize}

Then we can define $\orbit=(\omega^i,\anc_i)$ by taking $\anc_i=P_i$ and $\omega^i=\omega\circ\row_i$.

\paragraph{Defining the second coordinate of $\psi$.} We will define a function $\dy$ on oriented edges of $\Gpq$ as follows. Every edge $e$ participates in two $p$-cycles. If, in either cycle, it is part of the path whose numerical labels are of horizontal type, then we set $\dy(e)=0$. Otherwise, we set $\dy(e)=1$ if the terminal vertex of $e$ lies on this path, and $\dy(e)=-1$ if the initial vertex of $e$ lies on this path. (This exhausts all cases: if neither vertex of $e$ lies in the horizontal type path, then $e$ is horizontal in the other $p$-cycle).

Now, fix a vertex $v_0$ of $\Gpq$. Define $y(v)$ to be the sum over any path $\gamma$ from $v_0$ to $v$ of $\dy$ of the edges of $\gamma$. Then $y$ is well defined because it is easily seen that $\dy$ sums to $0$ around any $p$-cycle, and every loop can be broken into $p$-cycles. This $y$ will give the second coordinate of $\psi:\Gpq\To \Zz\times \Zz$.

\paragraph{Horizontal and vertical edges.} Let $\HG$ be the subset of $\Gpq$ given by throwing away the interior of any edge $e$ such that $\dy(e)\neq 0$. (We say that the edges of $\HG$ are horizontal, while the other edges of $\Gpq$ are vertical). Of course, $y$ is constant on each connected component of $\HG$. We wish to show that $y^{-1}(i)$ forms a single connected component of $\HG$ and that this connected component is an infinite embedded path.

It follows from our key observation that every $v\in \Gpq$ is incident to exactly two horizontal edges, which carry a natural orientation. Hence, all connected components of $\HG$ are either cycles or lines. Furthermore, $v$ has a natural parent $\Parent(v)$, the unique vertex connected to $v$ by a vertical edge with $v$ at the lower end, in $\Delta_v$.

\begin{prop}
Given $u,v\in\vertices(\Gpq)$ we have that $u,v$ are in the same connected component of $\HG$ if and only if $\Parent(u)$ and $\Parent(v)$ are in the same connected component of $\HG$.
\end{prop}

\begin{proof}
The parent of a horizontal neighbor of a vertex $v$ is either equal to the parent of $v$ or is a horizontal neighbor of the parent of $v$. Hence, if $u,v$ are in the same connected component, so are their parents.

Conversely, if $u$ and $v$ are horizontal neighbors, and $u$ is the parent of some vertex $\tilde{u}$ and $v$ the parent of $\tilde{v}$, it is easily seen (in $\Delta_v$) that there is a horizontal path connecting $\tilde{u}$ and $\tilde{v}$. Hence, if two vertices have parents in the same connected component, they must themselves be in the same connected component.
\end{proof}

Consequently, the collection of parents of vertices of some connected component itself is the vertex set of some (nonempty) connected component. If the original connected component is finite, it is clear (from our observation) that the set of parents is strictly smaller. Taking a minimal connected component, we obtain a contradiction. Hence, every connected component is infinite, and it follows that every connected component is a line.

\paragraph{Connectedness of $y^{-1}(i)$.} We wish now to show that $y^{-1}(i)$ consists of exactly one infinite embedded path. Suppose $y^{-1}(i)$ includes two connected components of $\HG$. Let $\gamma$ be a path connecting these two, such that $\gamma$ includes a minimal number of vertical edges. We know $\gamma$ (or its reverse) must contain at least one vertical edge, hence it contains a subpath consisting of a positive vertical edge, some horizontal edges, and a negative vertical edge.  By the proposition, we can replace this subpath with a horizontal subpath, contradicting minimality.

\paragraph{Defining the first coordinate of $\psi$.} We have seen that each $y^{-1}(i)$ is an infinite horizontal line. For each $i$, choose any vertex $v_0$  in $y^{-1}(i)$, and define $\psi|_{y^{-1}(i)}(v)$ to be the signed count of horizontal edges between $v_0$ and $v$.

\paragraph{Defining $P$.} Let $P(v)$ be the first coordinate of $\psi$ of the parent of $v$. By the proposition, every vertex of $\row_i$ is the parent of some vertex in $\row_{i'}$, so $P_i$ is surjective. It is clear that $P_i$ is nondecreasing, since if $u$ is immediately to the right of $v$, then the parent of $u$ is either immediately to the right of the parent of $v$ or they coincide.

\paragraph{Verifying properties.} For each vertex $v$, the vertices produced by $v$ in $\Delta_v$ form a path labeled by $\sigmasharp(\omega(v))$. Since these are exactly the vertices whose parent is $v$, all have $y$ value one greater than that of $v$, and the path is horizontal, we are done.
\end{proof}

Now, as promised, take $(\bB,\varsigma)$ with incommensurable growth rate.  Let $\aAO=(\Asharp)_\bB$, and let $R$, $L$ be as in the lemma.

\begin{cor}\label{Cor:MainThm}
Let $\fF$ consist of all $\aAO$-patterns $\omega$ on $\Deltapq$ such that the underlying $\Asharp$ labels appear in some $(\Asharp,\sigmasharp)$ orbit graph, and if $\gamma\subset\Deltapq$ is the path of vertices produced by $v$ in this orbit graph, then $\omega|_\gamma\in L$ and $(\omega(v),\omega|_\gamma)\in R$.

Then every configuration of $\XF$ is isomorphic to an $\aAO$ orbit graph, and hence $\XF$ is strongly aperiodic.
\end{cor}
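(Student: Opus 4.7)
The plan is to lift each configuration $\omega \in \XF$ to a genuine orbit of $(\aAO, L, R)$ on $\Gpq$, and then invoke part (4) of Lemma~\ref{Lemma:Technical Lemma}. By the first clause in the definition of the new $\fF$, the pointwise $\Asharp$-reduct $\alpha \circ \omega$ satisfies the hypothesis of the preceding lemma, so I obtain a bijection $\psi: \vertices(\Gpq) \to \Zz^2$, rows $\row_i$, and parent surjections $\anc_i$ making $\alpha\circ\omega\circ\row_i$ into the rows of an $(\Asharp, \sigmasharp)$-orbit. Setting $\omega^i := \omega \circ \row_i$ (now taking values in $\aAO$), I would verify $(\omega^i, \anc_i)$ is an orbit of $(\aAO, L, R)$. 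Any two consecutive row letters $\omega^i(j), \omega^i(j')$ lie inside some $\Delta_v$, so the $L$-adjacency conditions (common $\delta$, matching of $s$ and $p$ components) are forced by membership in $\fF$; hence each $\omega^i \in L^\infty$. Similarly, for each $v = \row_i(j)$ the $\fF$-constraint applied at $v$ gives $(\omega(v), \omega|_\gamma) \in R$ where $\gamma$ is the path of $v$'s children in $\Delta_v$ --- which is exactly $(\omega^i(j), \omega^{i'}|_{\anc_i^{-1}(j)}) \in R$, so $(\omega^i, \omega^{i'}) \in R$ with respect to $\anc_i$.

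For strong aperiodicity, suppose $\Phi: (\Gpq, \omega) \to (\Gpq, \omega)$ is any period. Because $\Phi$ preserves the $\Asharp$-labels, and because the row decomposition of the preceding lemma was built intrinsically from those labels (through $\dy$, the horizontal-type paths, and the $y$-function), $\Phi$ must send rows to rows. Being a label-preserving automorphism of each infinite-path row, $\Phi$ shifts the row index by some $k \in \Zz$ and translates each row by some $c_i \in \Zz$, compatibly with the parent maps $\anc_i$. If $k \neq 0$, this is precisely a $\pi = k$ period of $(\omega^i, \anc_i)$ in the sense of Lemma~\ref{Lemma:Technical Lemma}(4), contradicting incommensurability of growth rates. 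If $k = 0$, the compatibility $\anc_i(j + c_{i'}) = \anc_i(j) + c_i$ together with $\omega^i(j + c_i) = \omega^i(j)$ would force each $\omega^i$ to be a periodic infinite word, which projects under $\alpha$ to a periodic word in an orbit of the primitive expansive $(\Asharp, \sigmasharp)$; no such periodic orbit words exist, so $\Phi$ must be trivial.

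The main obstacle I anticipate is that last step: Lemma~\ref{Lemma:Technical Lemma}(4) rules out only vertical $\pi$-periods, so excluding purely horizontal periods requires either citing or quickly proving the standard fact that primitive expansive substitution systems have no periodic infinite orbit words (equivalently, their minimal subshifts are aperiodic). A secondary subtlety is confirming that $\Phi$ restricts to an actual translation and not a reflection on each row; this follows because $\Phi$ must respect the asymmetric parent-child relation between adjacent rows, so a translation on row $i$ forces a translation (not a reflection) on row $i'$, propagating in both directions.
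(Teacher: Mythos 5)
Your lifting of a configuration $\omega\in\XF$ to an orbit of $(\aAO,L,R)$ is exactly the paper's (very terse) argument: the preceding lemma supplies $\psi$, the rows and the parent maps from the $\Asharp$-reduct, and the local conditions imposed by $\fF$ on each copy of $\Deltapq$ give membership in $L$ and the production relations. That part is correct.

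The gap is in your treatment of a purely horizontal period ($k=0$). You reduce it to the claim that ``primitive expansive substitution systems have no periodic infinite orbit words,'' but that claim is false in general: ${\tt 0}\mapsto{\tt 00}$ is primitive and expansive, and every row of every orbit is the periodic word $\ldots{\tt 000}\ldots$ (indeed the paper proposes $\varsigma({\tt 0})={\tt 0}^k$ as a legitimate choice of $\bB$-system). So this cannot be cited as standard, and whether the rows of $(\Asharp,\sigmasharp)$-orbits are aperiodic words would itself require proof specific to that substitution. The cleaner route uses data you already wrote down: if $\Phi$ translates $\row_i$ by $c_i$, the compatibility $\anc_i(j+c_{i'})=\anc_i(j)+c_i$ forces $c_{i'}=\wlen{\sigma(w)}$ for a word $w$ of length $c_i$, so descending $m$ rows gives $c_0=\wlen{\sigma^m(w)}\geq\min_a\wlen{\sigma^m(a)}\to\infty$ by Theorem~\ref{ThmEigenSubst}, a contradiction unless every $c_i=0$; no fact about periodic points of substitution subshifts is needed. (A smaller imprecision: for $k\neq 0$ the automorphism yields only a period \emph{up to horizontal translation}, not literally $\omega^i=\omega^{i+\pi}$ and $\anc_i=\anc_{i+\pi}$ as the Lemma defines periodicity; one must either re-index the rows to make the period literal, or note that the proof of part (4) uses only $\delta(\omega^{i+\pi})=\delta(\omega^i)$ and the growth estimates along descendants, so it goes through unchanged.) To be fair, the paper compresses the entire aperiodicity discussion into the word ``hence,'' so you are supplying detail the authors omitted; but the detail you supply for the horizontal case is the one step that, as written, would fail.
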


\begin{proof}
The orbit graph structure is the same as in the lemma. We must only verify that the rows are admissible and that each row produces the next. But this immediately follows from the definition of $\fF$.
\end{proof}

By applying the corollary with $p=q=4g$, we see that every higher genus surface group carries a strongly aperiodic subshift of finite type.

\normalsize


\begin{thebibliography}{6}

\bibitem{aubrun_kari_bs} N. Aubrun and J. Kari, {\em Tiling problems on Baumslag-Solitar groups}, Proc.  Machines, Computations and Universality 2013 (MCU 2013), 2013. p.35-46.

\bibitem{berger} R. Berger, {\em The undecidability of the domino problem},
Memoirs Am. Math. Soc.  {\bf 66} (1966).

\bibitem{carroll_penland} D. Carroll and A. Penland {\em Periodic Points on Shifts of Finite Type and Commensurability Invariants of Groups}, preprint. 

\bibitem{cohen} D.B. Cohen {\em The large scale geometry of strongly aperiodic subshifts of finite type}, preprint.


\bibitem{gs_stap}  {C. Goodman-Strauss}, {\em A strongly 
aperiodic set of tiles in the hyperbolic plane}, Inv. Math. {\bf 159} (2005), 119-132.


\bibitem{gs_sasht}  {C. Goodman-Strauss}, {\em A Hierarchical Aperiodic Set of Tiles in the Hyperbolic Plane} Th. Comp.Sci. {\bf 411} (2010), 1085-1093.

\bibitem{gs_rptt} {C. Goodman-Strauss},
{\em Regular Production Systems and Triangle Tilings}, Th. Comp. Sci. {\bf 410} (2009), 1534-1549.


\bibitem{jeandel_stap} E. Jeandel {\em Aperiodic Subshifts of Finite Type on Groups}, preprint.

\bibitem{jeandel_poly} E. Jeandel {\em Aperiodic Subshifts on Polycyclic Groups}, preprint.

\bibitem{kari}  {J. Kari}, {\em A small aperiodic set of Wang  tiles}, Discrete Mathematics {\bf 160} (1996), 259-264.

\bibitem{kari_undH} J. Kari, {\em The tiling problem revisited}, Lecture Notes in Computer Science,  {\bf 4664} (2007), 72-79.


\bibitem{kitchens} B. Kitchens, {\em Symbolic dynamics}, Springer, New York, (1998).

\bibitem{magnus} W. Magnus, {\em
Noneuclidean tesselations and their groups. }
Pure and Applied Mathematics, Vol. 61. 
Academic Press, New York-London (1974). 


   
   

\bibitem {marg_moz} {G.A. Margulis and S. Mozes}, {\em Aperiodic 
tilings of the hyperbolic plane by convex polygons}, Isr. J. Math. 
{\bf 107} (1998), 319-325.


\bibitem{mozes_stap}  {S. Mozes}, {\em Aperiodic tilings}, Inv. 
Math. {\bf 128} (1997), 603-611.

 \bibitem{penrose2} R. Penrose {\em Pentaplexity}, Math. 
 Intelligencer {\bf 2} (1978), 32-37.

\bibitem{piantadosi} S. Piantadosi, {\em Symbolic dynamics on free groups}, honors thesis, U. North Carolina, 2006. {\tt http://colala.bcs.rochester.edu/papers/thesis.pdf}



\bibitem{robinson} {R. Robinson}, {\em Undecidability and nonperiodicity of tilings
in the plane}, Inv. Math.  {\bf 12} (1971), 177-209.

\bibitem{rob_hyp}  {R.M. Robinson}, {\em Undecidable tiling 
problems in the hyperbolic plane}, Inv. Math.  {\bf 44} (1978), 
259-264.


\bibitem{sadun_personal} {L. Sadun}, personal communication.



\bibitem{ssu} A. \c{S}ahin, M. Schraudner and I. Ugarcovici, {\em Strongly aperiodic shifts of finite type for the Heisenberg group}, preprint. 

\end{thebibliography}
\end{document}